\pdfoutput=1
\documentclass[11pt,leqno]{article}

\usepackage{amsmath,amsthm,amssymb}
\usepackage{graphicx,subfigure,booktabs,enumerate,url}
\usepackage{bm}
\usepackage{enumitem}
\usepackage{float}

\usepackage{geometry}
\theoremstyle{plain}
\newtheorem{thm}{Theorem}
\newtheorem{prop}[thm]{Proposition}
\newtheorem{cor}[thm]{Corollary}

\theoremstyle{remark}
\newtheorem{rem}[thm]{Remark}

\theoremstyle{definition}
\newtheorem{ex}[thm]{Example}

\newcounter{algorithm}
\renewcommand{\thealgorithm}{\arabic{algorithm}}

\numberwithin{figure}{section}
\numberwithin{table}{section}

\title{\Large\bf
An inverse-stochastic-control Galerkin estimator \\
for drift potentials}
\author{Yumiharu Nakano\\[1em]
        \small{Department of Mathematical and Computing Science, School of Computing} \\
        \small{Institute of Science Tokyo} \\
        \small{W8-28, 2-12-1, Ookayama, Meguro-ku, Tokyo 152-8550, Japan} \\
                \small{e-mail: nakano@comp.isct.ac.jp}
}

\date{\today}

\begin{document}

\maketitle

\begin{abstract}
\noindent
We consider the ill-posed inverse problem of recovering the drift
potential of an overdamped Langevin diffusion from discrete, possibly
irregular position observations.  Specializing the suboptimality gap
for inverse stochastic control to drift-potential recovery, we
parametrize the associated value function in a finite
basis.  Under a basis nondegeneracy condition, the resulting Galerkin
estimator is obtained from one positive-definite linear system whose
entries are smooth functionals of the empirical time marginals; no
transition density is required.  The gap equals one half
of the squared energy distance to the true potential, yielding a
best-approximation identity and energy-seminorm consistency as the basis
becomes complete.  At a fixed basis, we separate the sample fluctuation
from the time-quadrature error, thereby avoiding the
Euler--Maruyama $O(\Delta t)$ drift-discretization bias.  We also analyze
the finite basis as regularization, give a dimension-dependent
convergence rate for a concrete basis family, and bound observation-noise
bias.  One- and two-dimensional experiments verify the predicted
sampling-interval behavior.  An application to irregular animal
telemetry illustrates the method on a single long trajectory.

\begin{flushleft}
{\bf Key words}:
inverse problems for diffusions,
drift potential estimation,
inverse stochastic control,
Galerkin methods,
discretely observed diffusions,
irregular sampling,
finite-dimensional regularization
\end{flushleft}
{\small\noindent
{\bf Mathematics Subject Classification (2020):}
49N45, 60H10, 62M05
}
\end{abstract}

\section{Introduction}\label{sec:1}

We consider the inverse problem of identifying an unknown drift
potential~$U$ of the overdamped Langevin diffusion
\begin{equation}
\label{eq:1.1}
 dX_t = \nabla U(t, X_t)\, dt + \sigma\, dW_t,
 \qquad t\in[0,T],
\end{equation}
from a discrete-time, possibly irregularly spaced sample of one or more
trajectories.
Here $U:[0,T]\times\mathbb{R}^d\to\mathbb{R}$ may depend on time,
$\sigma>0$ is a scalar diffusion coefficient, and $\{W_t\}$ is a
standard $d$-dimensional Brownian motion.
This is a continuous-time inverse problem for stochastic differential
equations.
It arises across several modeling contexts, including molecular
dynamics and stochastic particle systems
\cite{frishman-ronceray:2020,bruckner-etal:2020},
parameter and drift estimation for multiscale and nonparametric
diffusion models \cite{pavliotis-stuart:2007,papaspiliopoulos-etal:2012},
data-driven Bayesian SDE inference
\cite{pokern-etal:2009,ruttor-etal:2013}, and movement ecology
\cite{kays-etal:2015,hoo-etal:2017}.
The methodology developed below is application-neutral.
We use a publicly available animal-telemetry dataset as the concrete
illustrative application in \S\ref{sec:9}.

Classical approaches include time-discretized likelihood or drift
regression \cite{bri:2010,pre-etal:2013,gloaguen-etal:2018},
occupancy-density and nonparametric drift estimators
\cite{bri-etal:2001,papaspiliopoulos-etal:2012,pokern-etal:2009,
gobet-etal:2004,comte-etal:2007,denis-etal:2021}, and weak-form
identification of the Fokker--Planck equation
\cite{lu-etal:2024,messenger-bortz:2022}.  Time-discretized likelihoods
incur an $O(\Delta t)$ drift bias \cite{florens-zmirou:1989,kessler:1997}
and become problematic for coarse, irregular records, while exact
transition densities are rarely available.  Occupancy-based methods
avoid this transition approximation but generally identify only the
gradient component of the drift.

We formulate the recovery of $U$ as an inverse stochastic control
problem using the suboptimality gap of \cite{nak:2026}.  Under a
quadratic kinetic cost, the potential is the negative value function,
$U=-v$.  We expand $v_\alpha=\sum_{j=1}^J\alpha_j\psi_j$ and minimize the
gap over the induced cost class.  The criterion is a strictly convex
quadratic form, so the inverse problem reduces to the positive-definite
linear system $B\alpha^*=-A$ and
\begin{equation}
\label{eq:1.2}
 \hat U(t,x) = -\sum_{j=1}^J \alpha^{*}_j\,\psi_j(t,x),
\end{equation}
whose entries use only empirical time marginals.  Irregular times enter
through quadrature weights rather than a transition density.

Our main contribution is a statistical and numerical analysis of this
finite-dimensional specialization.  At a fixed basis, the sample
fluctuation of order $\sqrt{\log N/N}$ separates additively from the
$O((\Delta t)^p)$ time-quadrature error on possibly irregular
observation nodes; the Euler--Maruyama $O(\Delta t)$
drift-discretization bias is absent.\footnote{The
$O(\cdot)$ notation is deterministic for quadrature terms and in
probability for sample fluctuations.}  Here $p$ is not a free
parameter: it is the order supported jointly by the quadrature rule and
the temporal regularity of the moment integrands.  The trapezoidal rule
gives $p=2$ on arbitrary observation nodes; larger $p$ requires
$C^p$ temporal regularity and a node pattern supporting a higher-order
nonnegative rule, as specified in Theorem~\ref{thm:5.3}.  We make the
amplification by the Galerkin conditioning explicit, combine basis
approximation and estimation errors, derive a growing-basis convergence
rate for a concrete family, and bound the perturbation caused by
position noise.  The action-gap identity gives the accompanying
Galerkin best-approximation characterization and population
consistency.  In the stationary limit the system is the Galerkin weak
form of the Fokker--Planck equation and coincides with closed-form score
matching \cite{hyvarinen:2007}.

At the population level, our criterion is the entropic action-matching
loss of \cite{nek-etal:2023} under $s=-v_\alpha$; we do not claim a new
variational loss.  Action matching learns the canonical curl-free drift
compatible with a prescribed marginal curve, typically through a neural
parametrization.  We instead embed the loss in inverse stochastic
control, restrict the value function to a finite Galerkin space, and
obtain a closed-form linear solve with stability and statistical error
bounds for discrete, possibly irregular observation times.  In the
stationary case a related linear variational estimator appears in
\cite{batz-etal:2016}.
One- and two-dimensional benchmarks test these claims, including a
time-dependent example and a stationarity-preserving rotational drift.
A public African-buffalo GPS record illustrates the estimator on one
long, irregularly sampled trajectory
\cite{getz-etal:2007,cross-etal:2016}.

Section~\ref{sec:methods} develops the framework and the algorithm,
Section~\ref{sec:theory} gives the error analysis and the stationary
case, Section~\ref{sec:9} reports the synthetic benchmarks and the
telemetry illustration, and Section~\ref{sec:10} discusses scope,
related estimators and extensions.

\section{Methods}\label{sec:methods}

\subsection{Inverse problem formulation}\label{sec:2}

We adopt the suboptimality-gap formulation of inverse stochastic
control of \cite{nak:2026}, which we summarize here so that the
present paper is self-contained.

\paragraph{Forward control problem.}
Let $(\Omega,\mathcal{F},\mathbb{P})$ be a complete probability space carrying
an $\mathbb{R}^d$-valued random variable $X_0$ and the Brownian motion
$\{W_t\}_{t\in[0,T]}$ of \eqref{eq:1.1}, and let
$\{\mathcal{F}_t\}_{t\in[0,T]}$ denote the augmented natural
filtration generated by $X_0$ and $W$.
Denote by $\mathcal{U}$ the class of $\mathbb{R}^d$-valued,
$\mathcal{F}_t$-progressively measurable controls $\{u_t\}$ with
$\mathbb{E}\int_0^T|u_t|^2\,dt < \infty$.
We call $u\in\mathcal{U}$ an \emph{admissible} control.
We replace the observation model \eqref{eq:1.1} by the controlled
diffusion driven by 
\begin{equation}
\label{eq:2.2}
 dX_t^u = u_t\, dt + \sigma\, dW_t,\qquad X_0^u=X_0
\end{equation}
for $u\in\mathcal{U}$.
The observed trajectory of \eqref{eq:1.1} then corresponds, in this
controlled framework, to the closed-loop feedback choice
$u_t = \nabla U(t,X_t^{u})$ in $\mathcal{U}$.
A \emph{cost pair} $(f,g)$ consists of a running cost
$f:[0,T]\times\mathbb{R}^d\times\mathbb{R}^d\to\mathbb{R}$ and a terminal
cost $g:\mathbb{R}^d\to\mathbb{R}$ such that 
the associated forward stochastic control problem has optimal value
$\mathcal{J}^*(f,g) = \inf_{u\in\mathcal{U}}\mathcal{J}(u;f,g)$, where
$\mathcal{J}(u;f,g)=\mathbb{E}\bigl[\int_0^T f(t,X_t^u,u_t)\, dt + g(X_T^u)\bigr]$. 
We adopt the following two standing hypotheses.
\begin{itemize}[leftmargin=2em]
\item[(H1)] $X_0$ is independent of $W$ and $\mathbb{E}|X_0|^2<\infty$.
The SDE \eqref{eq:1.1} has a pathwise unique strong solution $X$ on
$[0,T]$, and $\mathbb{E}\int_0^T|\nabla U(t,X_t)|^2\,dt<\infty$.
\item[(H2)] $f(t,x,u)$ is continuous and has quadratic
growth in $(x,u)$, $g$ is continuous and has quadratic growth in $x$,
and the forward optimal value is finite, $\mathcal{J}^\ast(f,g)>-\infty$.
\end{itemize}
Under (H1) the observed control $u^\ast_t=\nabla U(t,X_t)$ is
progressively measurable and square integrable, so $u^\ast\in\mathcal{U}$
and $X=X^{u^\ast}$.
It also gives $\mathbb{E}\sup_{t\le T}|X_t|^2<\infty$, so the law of
$X_t$ has finite second moment uniformly in $t$.
No growth condition on $U$ is needed.
The finiteness in (H2) is what makes the gap \eqref{eq:2.5} a
well-defined criterion, and the cost class used in \S\ref{sec:4}
satisfies it by \eqref{eq:4.3}.

\paragraph{Inverse problem via the suboptimality-gap functional.}
We view the observed control $u^\ast_t=\nabla U(t,X_t)$ as the optimal
control of \eqref{eq:2.2} for an unknown cost pair $(f^\ast,g^\ast)$.
Section~\ref{sec:3} constructs such a pair and shows that $u^\ast$ is
optimal for it under the integrability conditions stated there.
To identify $(f^\ast,g^\ast)$, and hence $U$, \cite{nak:2026}
introduces the suboptimality-gap functional
\begin{equation}
\label{eq:2.5}
 V(f,g) = \int_0^T\langle f(t,\cdot,\cdot),\mu_t\rangle\, dt
        + \langle g,\tilde\mu_T\rangle - \mathcal{J}^*(f,g),
\end{equation}
where $\tilde\mu_t=\operatorname{Law}(X_t)$ and
$\mu_t=\operatorname{Law}(X_t,u_t^\ast)$ are the state and state-control
marginal laws of the observed trajectories, respectively, and
$\langle\,\cdot\,,\,\mu\rangle$ denotes the duality pairing
$\langle\varphi,\mu\rangle:=\int\varphi(z)\,\mu(dz)$ between a Borel
measure $\mu$ and a $\mu$-integrable function $\varphi$ on the
underlying space.
$V(f,g)$ measures the gap between the empirical cost of the observed
trajectories under $(f,g)$ and the optimal cost under $(f,g)$.
The inverse problem of \cite{nak:2026} is
\begin{equation}
\label{eq:2.6}
 V^* = \inf_{(f,g)\in\mathcal{D}} V(f,g),
\end{equation}
over a candidate cost class $\mathcal{D}$.
The following nonnegativity property underlies the criterion \cite{nak:2026}. 
\begin{prop}[Suboptimality-gap characterization]
\label{prop:2.1}
Under (H1) and (H2), $V(f,g)\ge 0$ for all $(f,g)\in\mathcal{D}$,
with equality if and only if the observed control is optimal for the
forward problem under $(f,g)$, that is, it attains the optimal value
$\mathcal{J}^{*}(f,g)$.
\end{prop}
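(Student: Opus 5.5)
The plan is to show that the first two terms of \eqref{eq:2.5} are exactly the cost of the observed control, $\mathcal{J}(u^\ast;f,g)$, after which both claims follow from the definition of $\mathcal{J}^\ast(f,g)$ as an infimum.

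First I would verify that $\mathcal{J}(u^\ast;f,g)$ is well defined and finite. By (H1), $u^\ast\in\mathcal{U}$ and $X=X^{u^\ast}$. Also $\mathbb{E}\sup_{t\le T}|X_t|^2<\infty$ and $\mathbb{E}\int_0^T|u^\ast_t|^2dt<\infty$. Under (H2), $|f(t,x,u)|\le C(1+|x|^2+|u|^2)$ and $|g(x)|\le C(1+|x|^2)$. Hence $f(t,X_t,u^\ast_t)$ is $dt\otimes d\mathbb{P}$-integrable and $g(X_T)$ is integrable. I will assume that the quadratic-growth constant is uniform in $t$, which is the natural reading of (H2). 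Joint measurability of $(t,\omega)\mapsto f(t,X_t,u^\ast_t)$ follows from the continuity of $f$ and the progressive measurability of $(X,u^\ast)$.

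Second, I would rewrite the population terms. For a.e.\ $t$, $\mu_t=\operatorname{Law}(X_t,u^\ast_t)$, so
$$\langle f(t,\cdot,\cdot),\mu_t\rangle=\mathbb{E}f(t,X_t,u^\ast_t),$$
and similarly $\langle g,\tilde\mu_T\rangle=\mathbb{E}g(X_T)$. Fubini's theorem, justified by the integrability bound above, interchanges $\int_0^T$ and $\mathbb{E}$. This gives
$$\int_0^T\langle f(t,\cdot,\cdot),\mu_t\rangle\,dt+\langle g,\tilde\mu_T\rangle=\mathcal{J}(u^\ast;f,g).$$
Since $\mathcal{J}^\ast(f,g)>-\infty$ by (H2) and $\mathcal{J}^\ast\le\mathcal{J}(u^\ast;f,g)<\infty$, the quantity $V(f,g)=\mathcal{J}(u^\ast;f,g)-\mathcal{J}^\ast(f,g)$ is a well-defined finite real number.

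Finally, because $u^\ast$ is admissible, $\mathcal{J}^\ast(f,g)=\inf_{u\in\mathcal{U}}\mathcal{J}(u;f,g)\le\mathcal{J}(u^\ast;f,g)$, so $V(f,g)\ge0$. Equality $V(f,g)=0$ holds exactly when $\mathcal{J}(u^\ast;f,g)=\mathcal{J}^\ast(f,g)$, that is, when the observed control attains the infimum and is therefore optimal.

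The main obstacle is purely technical: the integrability and Fubini step, which requires checking that the quadratic-growth constants in (H2) control $f$ uniformly in $t$ and pairing this with the second-moment bounds from (H1). Once that is in place, the identification of $V$ with the cost difference is immediate and the rest is definitional.
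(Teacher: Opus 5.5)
Your proposal is correct and follows the paper's proof. You use $u^\ast\in\mathcal{U}$ and $X=X^{u^\ast}$ from (H1) to identify the first two terms of \eqref{eq:2.5} with the finite quantity $\mathcal{J}(u^\ast;f,g)$, and then conclude from $\mathcal{J}^*(f,g)\le\mathcal{J}(u^\ast;f,g)$; your quadratic-growth and Fubini details just spell out what the paper compresses into ``finite by (H1) and (H2).''
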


\begin{proof}
By (H1), $u^\ast\in\mathcal{U}$ and $X=X^{u^\ast}$, so the first two terms
of \eqref{eq:2.5} equal $\mathcal{J}(u^\ast;f,g)$, which is finite by (H1) and (H2).
Since $\mathcal{J}^*(f,g)\le \mathcal{J}(u^\ast;f,g)$, we get
$V(f,g)=\mathcal{J}(u^\ast;f,g)-\mathcal{J}^*(f,g)\ge0$, with equality exactly when
$u^\ast$ attains $\mathcal{J}^*(f,g)$.
\end{proof}

Proposition~\ref{prop:2.1} justifies \eqref{eq:2.6} as an
identification criterion.
The minimizer is the cost pair in
$\mathcal{D}$ that best explains the observed dynamics in the
suboptimality metric, and $V^*=0$ characterizes exact agreement.
This is the inverse-control formulation we use.
It generalizes the path-error minimization of the classical inverse
optimal control literature
\cite{ng-russell:2000,ziebart-etal:2008,ab-azar-etal:2020}
to a marginal-based, continuous-time SDE setting.
Earlier work by the present author \cite{nak:2023} considered the well-posedness of this identification program.

\subsection{Quadratic control cost and the canonical correspondence}\label{sec:3}

We restrict attention to candidate costs of the form
\begin{equation}
\label{eq:3.1}
 f(t,x,u) = \Phi(t,x) + \tfrac{1}{2}|u|^2,\quad g(x)=G(x),
\end{equation}
with $\Phi\in C([0,T]\times\mathbb{R}^d)$ and $G\in C(\mathbb{R}^d)$
satisfying the quadratic growth condition of (H2).
The quadratic control penalty $\tfrac{1}{2}|u|^2$ is the standard convex
cost in stochastic control \cite{yon-zho:1999}, and in animal movement it
reflects the observation that locomotor energetic expense increases with
ground speed \cite{hal:2016}.
The associated HJB equation is
\begin{equation}
\label{eq:3.2}
 -\partial_t v - \tfrac{\sigma^2}{2}\Delta v + \tfrac{1}{2}|\nabla v|^2
   = \Phi,\quad v(T,x)=G(x),
\end{equation}
and the optimal Markov feedback is $-\nabla v(t,x)$
\cite{yon-zho:1999}.

If now $U\in C^{1,2}([0,T]\times\mathbb{R}^d)$, then setting
\begin{align}
\label{eq:3.3}
 \Phi^\ast(t,x) &= \partial_t U(t,x) + \tfrac{\sigma^2}{2}\Delta U(t,x) + \tfrac{1}{2}|\nabla U(t,x)|^2, \\
\label{eq:3.4}
 G^\ast(x) &= -U(T,x),
\end{align}
the function $v^\ast(t,x):=-U(t,x)$ solves \eqref{eq:3.2} with
$\Phi=\Phi^\ast$ and $G=G^\ast$ by direct substitution, and its feedback
$-\nabla v^\ast(t,x) = \nabla U(t,x)$ coincides with the drift of the
observation model \eqref{eq:1.1}.
Let $f^\ast=\Phi^\ast+\tfrac12|u|^2$ and $g^\ast=G^\ast$.
For $u\in\mathcal{U}$, It\^o's formula applied to $v^\ast(t,X^u_t)$ and
\eqref{eq:3.2} give
\begin{equation}
\label{eq:canon-sq}
 \mathcal{J}(u;f^\ast,g^\ast)=\mathcal{J}(u^\ast;f^\ast,g^\ast)
 +\tfrac12\,\mathbb{E}\int_0^T|u_t-\nabla U(t,X^u_t)|^2\,dt
\end{equation}
whenever the stochastic integral is a martingale and the costs are
integrable.
This holds for every $u\in\mathcal{U}$ when, for example, $U$,
$\partial_tU$ and $\Delta U$ have at most quadratic growth and $\nabla U$
has at most linear growth, and then $u^\ast$ is optimal.
This is the \emph{canonical correspondence} between the potential $U$
and the value function $v$ that underlies the inverse-control
formulation below.

\begin{rem}
\label{rem:3.2}
The canonical correspondence \eqref{eq:3.3}--\eqref{eq:3.4} shows that,
under the quadratic kinetic cost, the canonical pair is available when
$\mathcal{D}$ is large enough.
If $(\Phi^\ast, G^\ast)\in\mathcal{D}$ and \eqref{eq:canon-sq} holds for
every $u\in\mathcal{U}$, then $V^*=0$ and this pair is a minimizer.
This does not by itself imply uniqueness, since other cost pairs may
also have zero gap.
The interesting case is when $\mathcal{D}$ is smaller and does not
contain $(\Phi^\ast, G^\ast)$.
Minimizing $V$ over $\mathcal{D}$ then returns the cost pair in
$\mathcal{D}$ that makes the observed dynamics as close to optimal as
possible in the suboptimality metric.  For the value-side class used
below, (P2) makes the coefficient unique, and
Proposition~\ref{prop:ls} identifies the minimizer with the best
energy-seminorm approximation of $-U$.
\end{rem}

\subsection{Cost-side and value-side parametrizations of $\mathcal{D}$}
\label{sec:4-prelim}

We specify $\mathcal{D}$ through a finite-dimensional parametrization.
Two natural choices are available.

The first is the \emph{cost-side} option.
We expand the running cost
$\Phi_\beta(t,x)=\sum_{j=1}^m\beta_j\,\phi_j(t,x)$ on a basis
$\{\phi_j\}$ and minimize $V(f_\beta,g_\beta)$ over $\beta$.
This is the standard inverse-control choice
\cite{ng-russell:2000,ziebart-etal:2008,ab-azar-etal:2020,nak:2023},
and under the Hopf--Cole transformation it recovers the classical
Schr\"odinger problem \cite{leo:2014,mik:2021a}, sketched in
Remark~\ref{rem:cost-side}.
We do not develop it further here.

The second is the \emph{value-side} option.
We expand the value function $v(t,x)=\sum_{j=1}^J\alpha_j\psi_j(t,x)$
on a basis $\{\psi_j\}$ and minimize $V(f_\alpha,g_\alpha)$ over
$\alpha$.
Here $(f_\alpha,g_\alpha)$ is the cost pair whose optimal value is
$v_\alpha$.
The induced cost class $\mathcal{D}_{\Psi}$ is the image
of $\mathbb{R}^J$ under the HJB inversion
$\alpha\mapsto(\Phi_\alpha,G_\alpha)$ given by
\eqref{eq:4.1}--\eqref{eq:4.2} below.

The two parametrizations are inverse-control formulations of
\eqref{eq:2.6} on different basis families.
We analyze only the value-side option, whose estimate converges to $U$
as the basis becomes complete (Corollary~\ref{cor:complete}).
They differ in operational form.
The cost-side option produces a Schr\"odinger problem.
The value-side option produces a single positive-definite linear
system $B\alpha^*=-A$.
The latter admits a closed-form solution and a Galerkin analysis in
\S\ref{sec:5}.
We adopt the value-side option for these reasons.
The remainder of this section develops it into the linear system that
defines our estimator.

\subsection{Linear-in-basis value functions}\label{sec:4}

Figure~\ref{fig:method-schematic} previews the pipeline.

\begin{figure}[htbp]
\centering
\includegraphics[width=\textwidth]{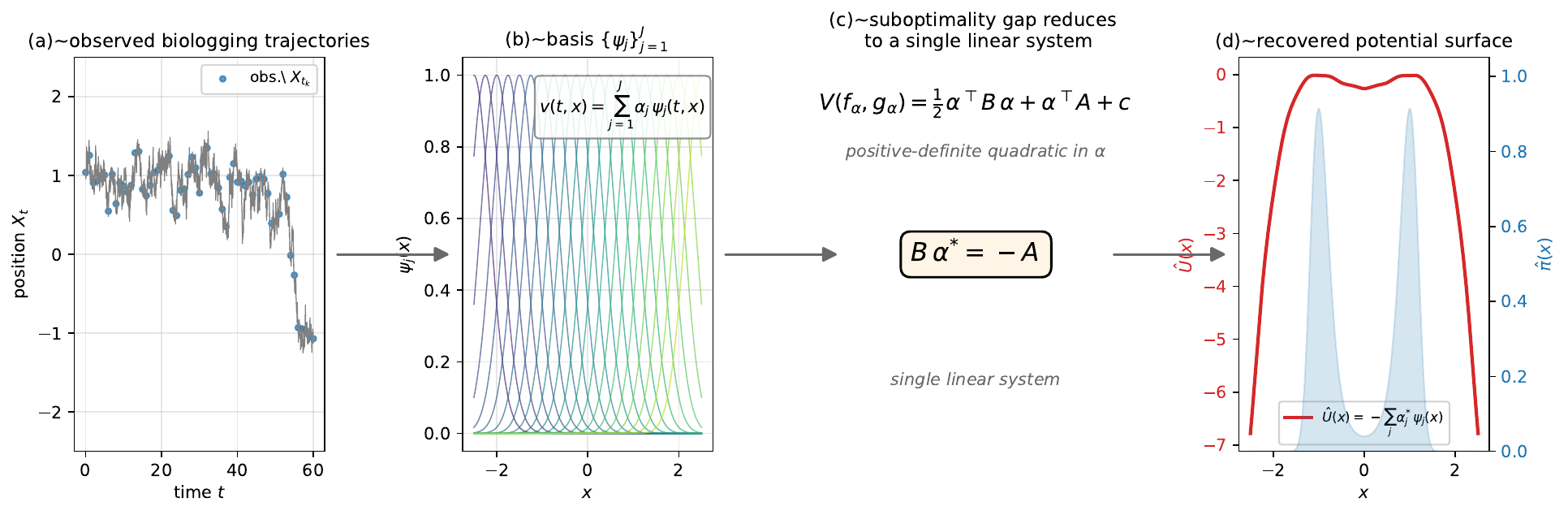}
\caption{Pipeline of the proposed estimator, illustrated on a 1D
double-well example.
(a) The input is a set of position observations $\{X_{t_k}^{(i)}\}$.
(b) A basis $\{\psi_j\}_{j=1}^J$ is chosen, and the value function is
parametrized as a linear combination $v(t,x)=\sum_j\alpha_j\psi_j(t,x)$.
(c) The suboptimality-gap functional becomes a positive-definite
quadratic form in $\alpha$, and the inverse problem reduces to a
single linear system $B\alpha^{*}=-A$.
(d) The recovered potential surface
$\hat U(x)=-\sum_j\alpha^{*}_j\psi_j(x)$
captures the high-occupancy regions of the empirical density $\hat\pi$
(here, the two wells of the double-well example) without requiring any
SDE-likelihood fit on increments.}
\label{fig:method-schematic}
\end{figure}

Let $\Psi=\{\psi_j\}_{j=1}^J\subset C^{1,2}([0,T]\times\mathbb{R}^d)$ be
a collection of basis functions satisfying the following two
conditions.
\begin{enumerate}
\item[(P1)] Each $\psi_j$, $\partial_t\psi_j$ and $\Delta\psi_j$ has at
most quadratic growth, and $\nabla\psi_j$ has at most linear growth.
\item[(P2)] The family $\{\nabla\psi_j\}_{j=1}^J$ of $\mathbb{R}^d$-valued
maps is linearly independent in $L^2$ under the measure
$\tilde\mu_t(dx)\,dt$ on $[0,T]\times\mathbb{R}^d$. Equivalently,
$\sum_{j=1}^J\alpha_j\nabla\psi_j=0$ holds $\tilde\mu_t(dx)\,dt$-almost
everywhere only when $\alpha=0$.
\end{enumerate}
For $\alpha\in\mathbb{R}^J$ we set
$v_\alpha(t,x)=\sum_{j=1}^J \alpha_j\psi_j(t,x)$.
By construction $v_\alpha$ is a candidate value function and the
recovered potential is $\hat U(t,x) = -v_\alpha(t,x)$ once
$\alpha=\alpha^{*}$ is fixed by the data-fitting step.

\paragraph{The induced cost class.}
The HJB inversion below identifies, for each $\alpha\in\mathbb{R}^J$,
a unique cost pair $(\Phi_\alpha,G_\alpha)$ whose forward optimal value
function is $v_\alpha$, so
$\mathcal{D}_{\Psi} =
\{(\Phi_\alpha+\tfrac12|u|^2,G_\alpha):\alpha\in\mathbb{R}^J\}$ is a
well-defined candidate cost class for the inverse-control problem.

For each $\alpha\in\mathbb{R}^J$, define
\begin{align}
\label{eq:4.1}
 \Phi_\alpha(t,x) &= -\sum_{j=1}^J \alpha_j\partial_t\psi_j(t,x)
   - \frac{\sigma^2}{2}\sum_{j=1}^J\alpha_j\Delta\psi_j(t,x)
   + \frac{1}{2}\Bigl|\sum_{j=1}^J \alpha_j\nabla\psi_j(t,x)\Bigr|^2, \\
\label{eq:4.2}
 G_\alpha(x) &= \sum_{j=1}^J \alpha_j\psi_j(T,x).
\end{align}
Direct substitution shows that $v_\alpha$ solves \eqref{eq:3.2} with
$\Phi=\Phi_\alpha$ and $G=G_\alpha$.
Define $u_\alpha^*(t,x)$ by
\[
 u_\alpha^*(t,x)=-\sum_j\alpha_j\nabla\psi_j(t,x).
\]
For every $u\in\mathcal{U}$, It\^o's formula applied to
$v_\alpha(t,X^u_t)$ and \eqref{eq:3.2} give
\[
 \mathcal{J}(u;f_\alpha,g_\alpha)
 =\langle v_\alpha(0,\cdot),\tilde\mu_0\rangle
 +\tfrac12\mathbb{E}\int_0^T|u_t-u^*_\alpha(t,X^u_t)|^2\,dt.
\]
Here the stochastic integral is a martingale and all costs are
integrable, by (P1) and $\mathbb{E}\sup_{t\le T}|X^u_t|^2<\infty$.
The feedback $u^*_\alpha$ is locally Lipschitz with linear growth, so it
defines an admissible control that attains the minimum, and the forward
optimal value is
\begin{equation}
\label{eq:4.3}
 \mathcal{J}^*(f_\alpha, g_\alpha) = \langle v_\alpha(0,\cdot),\tilde\mu_0\rangle
   = \sum_{j=1}^J \alpha_j \langle \psi_j(0,\cdot),\tilde\mu_0\rangle.
\end{equation}
We define the candidate cost class
\begin{equation}
\label{eq:4.4}
 \mathcal{D}_{\Psi} = \bigl\{(\Phi_\alpha+\tfrac{1}{2}|u|^2,\, G_\alpha): \alpha\in\mathbb{R}^J\bigr\}.
\end{equation}
Under (P1), each member of $\mathcal{D}_{\Psi}$ satisfies (H2), with the
finiteness given by \eqref{eq:4.3}.

The gap \eqref{eq:2.5} is evaluated on the observed dynamics, where the
control $u$ is not a separate observation but the virtually observed
control of the data, namely the drift $\nabla U$.
Its kinetic part $\tfrac12|u|^2$ then contributes only a constant to
$V(f_\alpha,g_\alpha)$, independent of $\alpha$, so the candidate
dependence enters only through $\Phi_\alpha$ and $G_\alpha$, which
integrate against the state occupancy $\tilde\mu_t$, so no observation of
the control is needed.

\begin{thm}[Quadratic form of the suboptimality gap]
\label{thm:4.3}
Under {\rm (H1)}, {\rm (P1)} and {\rm (P2)},
\begin{equation}
\label{eq:4.5}
 V(f_\alpha, g_\alpha) = \tfrac{1}{2}\alpha^{\mathsf T} B\, \alpha + \alpha^{\mathsf T} A + c
\end{equation}
where $c\in\mathbb{R}$ is independent of $\alpha$, and the vector
$A=(A_j)_{j=1}^{J}\in\mathbb{R}^{J}$ and the matrix
$B=(B_{jk})_{j,k=1}^{J}\in\mathbb{R}^{J\times J}$ have entries
\begin{align}
\label{eq:4.6}
 A_j &= \int_0^T \Bigl\langle -\partial_t \psi_j - \tfrac{\sigma^2}{2}\Delta \psi_j,\, \tilde\mu_t \Bigr\rangle\, dt
       + \langle \psi_j(T,\cdot),\tilde\mu_T\rangle - \langle \psi_j(0,\cdot),\tilde\mu_0\rangle, \\
\label{eq:4.7}
 B_{jk} &= \int_0^T \langle (\nabla\psi_j)^{\mathsf T}\nabla\psi_k,\, \tilde\mu_t\rangle\, dt.
\end{align}
In particular, $B$ is symmetric and positive definite.
\end{thm}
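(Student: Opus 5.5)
The plan is to substitute the cost pair $(f_\alpha,g_\alpha)$ of \eqref{eq:4.1}--\eqref{eq:4.2} directly into the definition \eqref{eq:2.5} of $V$. We then use the closed-form optimal value \eqref{eq:4.3} for $\mathcal{J}^*(f_\alpha,g_\alpha)$, which was established just before the statement under (P1). Since $u^\ast_t=\nabla U(t,X_t)$ and $\mu_t=\operatorname{Law}(X_t,u^\ast_t)$, the running-cost term splits as
\[
\int_0^T\langle f_\alpha,\mu_t\rangle\,dt=\int_0^T\langle \Phi_\alpha(t,\cdot),\tilde\mu_t\rangle\,dt+\tfrac12\,\mathbb{E}\int_0^T|\nabla U(t,X_t)|^2\,dt .
\]
The last term is finite by (H1) and independent of $\alpha$, so we put it into $c$. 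In fact $c$ is exactly this kinetic constant.

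Next we expand $\Phi_\alpha$ using \eqref{eq:4.1}. The terms linear in $\alpha$, namely $-\alpha_j\partial_t\psi_j-\tfrac{\sigma^2}{2}\alpha_j\Delta\psi_j$, integrated against $\tilde\mu_t\,dt$, give the first integral in $A_j$. The terminal term $\langle G_\alpha,\tilde\mu_T\rangle=\sum_j\alpha_j\langle\psi_j(T,\cdot),\tilde\mu_T\rangle$ gives the second contribution. Subtracting \eqref{eq:4.3} gives $-\sum_j\alpha_j\langle\psi_j(0,\cdot),\tilde\mu_0\rangle$. The quadratic term $\tfrac12|\sum_j\alpha_j\nabla\psi_j|^2=\tfrac12\sum_{j,k}\alpha_j\alpha_k(\nabla\psi_j)^{\mathsf T}\nabla\psi_k$ yields $\tfrac12\alpha^{\mathsf T}B\alpha$ with $B$ as in \eqref{eq:4.7}. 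Interchanging the finite sums with the integrals is legitimate once each piece is integrable. This holds because (P1) gives quadratic growth of $\psi_j,\partial_t\psi_j,\Delta\psi_j$ and of $(\nabla\psi_j)^{\mathsf T}\nabla\psi_k$, while (H1) gives $\sup_t\int|x|^2\,\tilde\mu_t(dx)\le\mathbb{E}\sup_{t\le T}|X_t|^2<\infty$. Measurability in $t$ follows from continuity of the integrands and of $t\mapsto X_t$.

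Finally, $B$ is symmetric because the entries $(\nabla\psi_j)^{\mathsf T}\nabla\psi_k$ are symmetric in $j,k$. For definiteness, note that $\alpha^{\mathsf T}B\alpha=\int_0^T\langle|\sum_j\alpha_j\nabla\psi_j|^2,\tilde\mu_t\rangle\,dt\ge0$. By (P2), this vanishes only if $\sum_j\alpha_j\nabla\psi_j=0$ $\tilde\mu_t(dx)\,dt$-a.e., hence only if $\alpha=0$. The only mildly delicate point is the integrability bookkeeping, which justifies splitting $V$ into separately finite pieces rather than manipulating a difference of possibly infinite quantities. Once (P1) and the uniform second-moment bound from (H1) are invoked, everything else is direct substitution.
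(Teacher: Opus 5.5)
Your proposal is correct and follows essentially the same route as the paper: substitute \eqref{eq:4.1}--\eqref{eq:4.3} into \eqref{eq:2.5}, treat the $\alpha$-independent kinetic term $\tfrac12\int_0^T\langle|u|^2,\mu_t\rangle\,dt$ as the constant $c$, collect the linear and quadratic terms into $A$ and $B$, and use (P2) for positive definiteness. Your explicit integrability check, via (P1) and the uniform second-moment bound from (H1), is a detail the paper leaves implicit.
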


\begin{proof}
By \eqref{eq:2.5},
$V(f_\alpha,g_\alpha)
= \int_0^T\langle\Phi_\alpha,\tilde\mu_t\rangle\, dt
 + \tfrac{1}{2}\int_0^T\langle |u|^2,\mu_t\rangle\, dt
 + \langle G_\alpha,\tilde\mu_T\rangle
 - \mathcal{J}^*(f_\alpha,g_\alpha)$.
The middle term $c_0:=\tfrac{1}{2}\int_0^T\langle |u|^2,\mu_t\rangle\, dt$ is
independent of $\alpha$.
Substituting \eqref{eq:4.1}, \eqref{eq:4.2}, \eqref{eq:4.3} into $V(f_\alpha, g_\alpha)$, we get 
\begin{align*}
 V(f_\alpha,g_\alpha)
 &= -\sum_j\alpha_j\int_0^T\langle\partial_t\psi_j,\tilde\mu_t\rangle\, dt
  -\frac{\sigma^2}{2}\sum_j\alpha_j\int_0^T\langle\Delta\psi_j,\tilde\mu_t\rangle\, dt \\
 &\quad +\frac{1}{2}\sum_{j,k}\alpha_j\alpha_k\int_0^T\langle(\nabla\psi_j)^{\mathsf T}\nabla\psi_k,\tilde\mu_t\rangle\, dt \\
 &\quad + c_0 + \sum_j\alpha_j\langle\psi_j(T,\cdot),\tilde\mu_T\rangle - \sum_j\alpha_j\langle\psi_j(0,\cdot),\tilde\mu_0\rangle \\
 &= \tfrac{1}{2}\sum_{j,k}\alpha_j\alpha_k B_{jk} + \sum_j\alpha_j A_j + c_0,
\end{align*}
which is \eqref{eq:4.5} with $c=c_0$.
$B$ is symmetric by definition.
By (P2), for nonzero $\alpha$ we have
$\alpha^{\mathsf T} B\alpha
 = \int_0^T\langle |\sum_j\alpha_j\nabla\psi_j|^2,\tilde\mu_t\rangle\, dt
 > 0$, so $B$ is positive definite.
\end{proof}

Since $B$ is positive definite, $V(f_\alpha,g_\alpha)$ is a coercive
strictly convex quadratic in $\alpha$, and its unique minimizer over
$\mathbb{R}^J$ is
\begin{equation}
\label{eq:4.8}
 \alpha^* = -B^{-1} A,
\end{equation}
and the estimated potential is
\begin{equation}
\label{eq:4.9}
 \hat U(t,x) = -v_{\alpha^*}(t,x) = -\sum_{j=1}^J \alpha^*_j\, \psi_j(t,x).
\end{equation}

Adding a function of $t$ alone to $U$ does not change the drift, so $U$
and $\hat U$ are determined only up to such a function.

\begin{rem}[Anisotropic noise and a weighted kinetic cost]
\label{rem:aniso}
Let the controlled dynamics be $dX^u_t=u_t\,dt+\Sigma\,dW_t$ with a
constant invertible $\Sigma$, and let the kinetic cost be
$\tfrac12u^{\mathsf T}Qu$ with $Q$ symmetric positive definite.
The Hamilton--Jacobi--Bellman equation \eqref{eq:3.2} becomes
$-\partial_tv-\tfrac12\mathrm{tr}(\Sigma\Sigma^{\mathsf T}D^2v)
+\tfrac12(\nabla v)^{\mathsf T}Q^{-1}\nabla v=\Phi$ with the same
terminal condition, its optimal feedback is $-Q^{-1}\nabla v$, and
$v^\ast=-U$ then corresponds to the observation model
$dX_t=Q^{-1}\nabla U(t,X_t)\,dt+\Sigma\,dW_t$ in place of
\eqref{eq:1.1}.
Theorem~\ref{thm:4.3} holds with
\begin{align*}
 A_j &= \int_0^T\Bigl\langle-\partial_t\psi_j
   -\tfrac12\mathrm{tr}\bigl(\Sigma\Sigma^{\mathsf T}D^2\psi_j\bigr),
   \tilde\mu_t\Bigr\rangle\,dt
   + \langle\psi_j(T,\cdot),\tilde\mu_T\rangle
   - \langle\psi_j(0,\cdot),\tilde\mu_0\rangle, \\
 B_{jk} &= \int_0^T\langle(\nabla\psi_j)^{\mathsf T}Q^{-1}\nabla\psi_k,
   \tilde\mu_t\rangle\,dt,
\end{align*}
and \S\ref{sec:5} holds verbatim with the weighted energy form
$a_Q(w_1,w_2)=\int_0^T\langle(\nabla w_1)^{\mathsf T}Q^{-1}\nabla w_2,
\tilde\mu_t\rangle\,dt$ in place of $a$.
The setting of this paper is $\Sigma=\sigma I$ and $Q=I$, and the
telemetry application of \S\ref{sec:9.3} is the case $\Sigma=\sigma R$
and $Q=R^{-2}$ with a diagonal $R$.
\end{rem}

\begin{ex}[Gaussian basis]
\label{ex:4.6}
Let $\psi_j(t,x)=\eta_j(t)\,e^{-|x-c_j|^2/(2h_j^2)}$ for centers
$c_j\in\mathbb{R}^d$, bandwidths $h_j>0$, and time profiles
$\eta_j\in C^1([0,T])$, taken in $C^{p+1}([0,T])$ when the quadrature
order $p$ of \S\ref{sec:5} is used.
Both (P1) and (P2) hold whenever the spatial centers are distinct, the
time profiles sharing a common center are linearly independent, and
$\tilde\mu_t$ has a positive density on a nonempty open set for almost
every $t$.
The gradient requirement (P2) reduces to linear independence of the
$\psi_j$ themselves, since
$\sum_j\alpha_j\nabla\psi_j=\nabla\sum_j\alpha_j\psi_j$ vanishes on that
open set only when $\sum_j\alpha_j\psi_j$ is constant in $x$ there, hence
on $\mathbb{R}^d$ by analyticity, and the Gaussian decay forces that
constant to zero.
$\Delta\psi_j$ and $(\nabla\psi_j)^{\mathsf T}\nabla\psi_k$ have closed-form expressions,
and integration against $\tilde\mu_t$ reduces to expectations of
Gaussian-modulated polynomials, amenable to Monte Carlo and Gauss--Hermite
quadrature.
\end{ex}

\begin{ex}[Tensor-product time-space basis]
\label{ex:4.7}
Let $\psi_{(j,m)}(t,x)=\zeta_m(t)\,\varphi_j(x)$
with $\{\varphi_j\}$ a spatial basis (RBF, B-spline, Hermite) and
$\{\zeta_m\}$ a temporal basis (Fourier modes for periodic regimes,
polynomial, B-spline).
When $\tilde\mu_t$ does not depend on $t$, $A$ and $B$ factor across the
tensor structure, yielding sparse linear algebra, and the temporal basis
handles periodic regimes explicitly.
\end{ex}

\subsection{Algorithm}\label{sec:8}

\begin{center}
\fbox{\parbox{0.92\textwidth}{%
\refstepcounter{algorithm}\label{alg:7-1}%
\textbf{Algorithm~\thealgorithm{} (Linear-in-basis potential identification).}
\\[0.5ex]
\textbf{Input:} Trajectories $\{X_{t_k}^{(i)}\}$, $i=1,\ldots,N$, $k=0,\ldots,K$;
basis $\Psi=\{\psi_j\}_{j=1}^J$;
diffusion $\sigma$;
Tikhonov parameter $\lambda\ge 0$;
quadrature weights $\{w_k\}$.
\\[0.5ex]
\textbf{Output:} Coefficient vector $\hat\alpha^*\in\mathbb{R}^J$ and potential
estimate $\hat U(t,x)$.
\begin{enumerate}[leftmargin=2em]
\item \emph{Compute basis evaluations.} For each $j,k,i$: evaluate
$\partial_t\psi_j(t_k, X_{t_k}^{(i)})$, $\Delta\psi_j(t_k, X_{t_k}^{(i)})$,
and $\nabla\psi_j(t_k, X_{t_k}^{(i)})$.
\item \emph{Empirical marginal expectations.} For each $j,k$:
$\bar a_j^{(k)} = -\tfrac{1}{N}\sum_i\partial_t\psi_j(t_k, X_{t_k}^{(i)})
- \tfrac{\sigma^2}{2N}\sum_i\Delta\psi_j(t_k, X_{t_k}^{(i)})$;
for $k=0,K$: $\bar\psi_j^{(k)} = \tfrac{1}{N}\sum_i\psi_j(t_k, X_{t_k}^{(i)})$.
\item \emph{Empirical Gram-like terms.} For each $j,\ell,k$:
$\bar b_{j\ell}^{(k)} = \tfrac{1}{N}\sum_i\bigl[(\nabla\psi_j)^{\mathsf T}\nabla\psi_\ell\bigr](t_k,X_{t_k}^{(i)})$.
\item \emph{Time quadrature.}
$\hat A_j = \sum_k w_k \bar a_j^{(k)} + \bar\psi_j^{(K)} - \bar\psi_j^{(0)}$;
$\hat B_{j\ell} = \sum_k w_k \bar b_{j\ell}^{(k)}$.
\item \emph{Solve the linear system}
$(\hat B + \lambda I)\,\hat\alpha^* = -\hat A$,
that is, $\hat\alpha^* = -(\hat B + \lambda I)^{-1}\hat A$.
\item \emph{Return} $\hat U(t,x) = -\sum_j \hat\alpha^*_j \psi_j(t,x)$.
\end{enumerate}
}}
\end{center}

Basis evaluations cost $O(NKJd)$ and assembly of the Gram matrix costs
$O(NKJ^2d)$.
The linear system contributes an additional $O(J^3)$.
The Tikhonov parameter $\lambda$, the basis dimension $J$ and the basis
bandwidth can be selected by cross-validation across held-out
trajectories.
By Theorem~\ref{thm:4.3} and Proposition~\ref{prop:ls},
$\tfrac12\alpha^{\mathsf T}B\alpha+\alpha^{\mathsf T}A
=\tfrac12\|v_\alpha-v^\ast\|_a^2-c$ with $c$ independent of $\alpha$, so
this quantity, evaluated with $\hat A$ and $\hat B$ from held-out
trajectories, measures the energy distance to the true value function
up to a constant.
We split the trajectories into a few groups, fit $\hat\alpha^*$ on all
but one group, evaluate this criterion on the held-out group, and choose
the values minimizing its average.
Alternatively, the discrepancy principle sets the residual
$|\hat B\hat\alpha^*(\lambda)+\hat A|=\lambda|\hat\alpha^*(\lambda)|$
equal to a bound on the noise level such as $\varepsilon_A$ of
Theorem~\ref{thm:5.3}, which requires an estimate of $\kappa$.
The experiments of \S\ref{sec:9} use fixed values, $\lambda=10^{-8}$ in
the stationary benchmarks of \S\ref{sec:9.1}--\S\ref{sec:9.2},
$\lambda=10^{-2}$ in the time-dependent benchmark, and $\lambda=10^{-4}$
in the rotational experiment and for the telemetry data.

\section{Theoretical analysis and connections}\label{sec:theory}

This section collects the variational, statistical and
stochastic-control structure that underlies the estimator of
Section~\ref{sec:methods}.
Section~\ref{sec:5} casts the closed-form linear system as the Galerkin
discretization of an underlying continuous variational problem and
derives a C\'ea-type bound and a statistical error decomposition.
Section~\ref{sec:6} specializes to the stationary regime.
It identifies the estimator with the classical Fokker--Planck weak-form
recovery in the complete-basis limit.
Remark~\ref{rem:cost-side} sketches a dual cost-side parametrization
linking the framework to the Schr\"odinger problem.

\subsection{Galerkin interpretation and error analysis}\label{sec:5}

The first-order optimality condition $B\alpha^*=-A$ is the variational
equation of a Galerkin projection of a continuous problem.
We make this explicit and derive the C\'ea-type bound and the
statistical-error decomposition that exhibits $\Delta t$-decoupling.

\paragraph{The continuous variational problem.}
For functions $w_1,w_2$ on $[0,T]\times\mathbb{R}^d$ whose spatial
gradients lie in $L^2(\tilde\mu_t(dx)\,dt)$, define the bilinear form
\begin{equation}
\label{eq:5.1}
 a(w_1, w_2) = \int_0^T \langle (\nabla w_1)^{\mathsf T}\nabla w_2,\,\tilde\mu_t\rangle\, dt
\end{equation}
and, for $w$ satisfying (P1), the linear functional
\begin{equation}
\label{eq:5.2}
 L(w) = -\int_0^T\langle\partial_t w + \tfrac{\sigma^2}{2}\Delta w,\,\tilde\mu_t\rangle\, dt
                   + \langle w(T,\cdot),\tilde\mu_T\rangle - \langle w(0,\cdot),\tilde\mu_0\rangle.
\end{equation}
The form $a$ is only positive semidefinite: functions with zero spatial
gradient, including arbitrary functions of time, lie in its nullspace.
Within the above energy class, let
\[
 \mathcal{N}_a=\{z:a(z,z)=0\},\qquad [v]=v+\mathcal{N}_a.
\]
For $w$ satisfying (P1), It\^o's formula gives
\[
 L(w)=\int_0^T\langle\nabla U\cdot\nabla w,\tilde\mu_t\rangle\,dt.
\]
Hence $L(w)=0$ for every $w\in\mathcal{N}_a$ satisfying (P1).  We
therefore pose the problem on the quotient by $\mathcal{N}_a$, so that
it determines a spatial gradient rather than a particular additive
function of time.
The continuous variational problem reads
\begin{equation}
\label{eq:5.3}
 \text{find } [v^\ast]:\quad a(v^\ast, w) = -L(w)\ \text{for all } w \text{ satisfying (P1)}.
\end{equation}

\begin{prop}[Variational identity for $-U$]
\label{prop:5.1}
Under {\rm (H1)}, the class $[-U]$ satisfies \eqref{eq:5.3}.
\end{prop}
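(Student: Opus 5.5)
The plan is to verify \eqref{eq:5.3} directly for $v^\ast=-U$, using the identity $L(w)=\int_0^T\langle\nabla U\cdot\nabla w,\tilde\mu_t\rangle\,dt$ for $w$ satisfying (P1). With this identity the claim is immediate. By definition,
\[
a(-U,w)=-\int_0^T\langle\nabla U\cdot\nabla w,\tilde\mu_t\rangle\,dt=-L(w).
\]
So the substance of the proof is a careful derivation of that identity under (H1) alone, together with a check that $-U$ lies in the energy class on which $a$ is defined. The latter is exactly the integrability $\mathbb{E}\int_0^T|\nabla U(t,X_t)|^2dt<\infty$ in (H1), since $\tilde\mu_t=\operatorname{Law}(X_t)$. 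The statement is about the class $[-U]$, so I will also observe that it is well-posed: if $z\in\mathcal N_a$, Cauchy--Schwarz gives $a(z,w)=0$ for all $w$, and therefore every representative of $[-U]$ satisfies \eqref{eq:5.3}.

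For the identity, I fix $w\in C^{1,2}$ satisfying (P1) and apply It\^o's formula to $w(t,X_t)$ along \eqref{eq:1.1}:
\[
w(T,X_T)-w(0,X_0)=\int_0^T\bigl(\partial_tw+\tfrac{\sigma^2}{2}\Delta w+\nabla U\cdot\nabla w\bigr)(t,X_t)\,dt+\sigma\int_0^T\nabla w(t,X_t)\cdot dW_t.
\]
Taking expectations, then rewriting each expectation as a pairing with $\tilde\mu_t$ and rearranging, gives
\[
\langle w(T,\cdot),\tilde\mu_T\rangle-\langle w(0,\cdot),\tilde\mu_0\rangle-\int_0^T\langle\partial_tw+\tfrac{\sigma^2}{2}\Delta w,\tilde\mu_t\rangle dt=\int_0^T\langle\nabla U\cdot\nabla w,\tilde\mu_t\rangle dt.
\]
The left-hand side is precisely $L(w)$ from \eqref{eq:5.2}. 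Fubini is needed to exchange $\mathbb{E}$ and $\int dt$.

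The main obstacle is justifying these manipulations without any growth assumption on $U$. The relevant facts, all from (H1), are:
\begin{itemize}
\item $\mathbb{E}\sup_t|X_t|^2<\infty$, together with the quadratic growth of $w$, $\partial_tw$ and $\Delta w$ in (P1), makes the terms $w(T,X_T)$, $w(0,X_0)$, $\partial_tw$ and $\Delta w$ integrable.
\item The linear growth of $\nabla w$ gives $\mathbb{E}\int_0^T|\nabla w(t,X_t)|^2dt<\infty$. Hence the stochastic integral is a true $L^2$ martingale with zero mean, and no localization of $U$ is required.
\item The cross term is integrable by Cauchy--Schwarz: $\mathbb{E}\int|\nabla U\cdot\nabla w|\le(\mathbb{E}\int|\nabla U|^2)^{1/2}(\mathbb{E}\int|\nabla w|^2)^{1/2}<\infty$.
\end{itemize}
It\^o's formula itself only needs $w\in C^{1,2}$ and that $X$ is a continuous semimartingale with drift $\nabla U(t,X_t)$, which (H1) provides. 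No regularity of $U$ beyond the existence of $\nabla U$ along the path enters, so the identity holds for every admissible $w$.

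Combining the two displays gives $a(-U,w)=-L(w)$ for all $w$ satisfying (P1). Hence $[-U]$ solves \eqref{eq:5.3}.
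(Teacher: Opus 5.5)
Your proof is correct and follows the paper's argument: you apply It\^o's formula to $w(t,X_t)$, get a true martingale from the linear growth of $\nabla w$ together with $\mathbb{E}\sup_t|X_t|^2<\infty$, bound the cross term by Cauchy--Schwarz using (H1), and take expectations to obtain $a(-U,w)=-L(w)$. Your extra remark that every representative of $[-U]$ works, since $|a(z,w)|\le\|z\|_a\|w\|_a=0$ for $z\in\mathcal N_a$, is a small but welcome check that the paper leaves implicit.
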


\begin{proof}
Let $w$ satisfy (P1) and apply It\^o's formula to $w(t,X_t)$.
By (P1) and (H1) the stochastic integral is a martingale and every term
is integrable, since
\[
 \mathbb{E}\int_0^T|\nabla U\cdot\nabla w|\,dt
 \le\Bigl(\mathbb{E}\int_0^T|\nabla U|^2dt\Bigr)^{1/2}
     \Bigl(\mathbb{E}\int_0^T|\nabla w|^2dt\Bigr)^{1/2}.
\]
Taking expectations gives
\[
 \langle w(T,\cdot),\tilde\mu_T\rangle-\langle w(0,\cdot),\tilde\mu_0\rangle
 =\int_0^T\Bigl\langle\partial_t w+\tfrac{\sigma^2}{2}\Delta w
 +\nabla U\cdot\nabla w,\tilde\mu_t\Bigr\rangle\,dt,
\]
which is $a(-U,w)=-L(w)$.
\end{proof}

\paragraph{Galerkin projection.}
The minimizer $\alpha^*$ given by \eqref{eq:4.8} solves
\begin{equation}
\label{eq:5.4}
 a(v_{\alpha^*}, \psi_j) = -L(\psi_j),\quad j=1,\ldots,J,
\end{equation}
which is the Galerkin discretization of \eqref{eq:5.3} on $\mathrm{span}\,\Psi$.
The classical C\'ea lemma controls the Galerkin error by a constant
multiple of the best basis approximation of the exact solution.
In our setting the bilinear form $a$ is symmetric and
positive semidefinite, so this constant equals one and the bound below
is an equality.

Let $\|\cdot\|_{a} = a(\cdot,\cdot)^{1/2}$ denote the energy seminorm on
the gradient.

\begin{prop}[Least-squares form of the gap]
\label{prop:ls}
Under {\rm (H1)}, {\rm (P1)} and {\rm (P2)}, for every $\alpha\in\mathbb{R}^J$,
\begin{equation}
\label{eq:ls-gap}
 V(f_\alpha,g_\alpha)
 = \tfrac12\int_0^T\bigl\langle|\nabla v_\alpha+\nabla U|^2,\tilde\mu_t\bigr\rangle\,dt
 = \tfrac12\|v_\alpha-v^\ast\|_a^2 .
\end{equation}
\end{prop}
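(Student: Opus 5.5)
The plan is to start from the quadratic representation of Theorem~\ref{thm:4.3} and identify the constant $c$ explicitly. Write the linear part in terms of the functional $L$ of \eqref{eq:5.2}: by \eqref{eq:4.6}, $A_j=L(\psi_j)$, so by linearity $\alpha^{\mathsf T}A=L(v_\alpha)$. By \eqref{eq:4.7} and \eqref{eq:5.1}, $\alpha^{\mathsf T}B\alpha=a(v_\alpha,v_\alpha)$. Thus
\[
 V(f_\alpha,g_\alpha)=\tfrac12 a(v_\alpha,v_\alpha)+L(v_\alpha)+c_0,
 \qquad c_0=\tfrac12\int_0^T\langle|u|^2,\mu_t\rangle\,dt .
\]

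The second step is to use the It\^o identity from the proof of Proposition~\ref{prop:5.1}, applied with $w=v_\alpha$. This is legitimate because $v_\alpha$ satisfies (P1), being a finite combination of the $\psi_j$. It gives
\[
 L(v_\alpha)=\int_0^T\langle\nabla U\cdot\nabla v_\alpha,\tilde\mu_t\rangle\,dt .
\]
For $c_0$, since $\mu_t=\operatorname{Law}(X_t,\nabla U(t,X_t))$, we have
\[
 c_0=\tfrac12\int_0^T\langle|\nabla U|^2,\tilde\mu_t\rangle\,dt ,
\]
which is finite by (H1).

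Summing the three terms completes the square:
\[
 \tfrac12|\nabla v_\alpha|^2+\nabla U\cdot\nabla v_\alpha+\tfrac12|\nabla U|^2=\tfrac12|\nabla v_\alpha+\nabla U|^2 .
\]
This yields the first equality in \eqref{eq:ls-gap}. The second equality is just the definition of $\|\cdot\|_a$ together with $v^\ast=-U$. Here $\|v_\alpha-v^\ast\|_a^2$ is understood as $\int_0^T\langle|\nabla v_\alpha-\nabla v^\ast|^2,\tilde\mu_t\rangle\,dt$, which is well defined because $\nabla U\in L^2(\tilde\mu_t\,dt)$ by (H1), even though $U$ itself need not satisfy (P1).

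The only delicate point is integrability and the martingale property in the It\^o step. No growth condition is imposed on $U$, so it must be $w=v_\alpha$ that carries the regularity, never $U$. The cross term is integrable by Cauchy--Schwarz, exactly as in the proof of Proposition~\ref{prop:5.1}. The stochastic integral $\int\sigma\nabla v_\alpha\cdot dW$ is a true martingale because $\nabla v_\alpha$ has linear growth and $\mathbb{E}\sup_t|X_t|^2<\infty$. Once those are in place, the rest is algebra.
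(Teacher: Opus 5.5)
Your proposal is correct and follows the paper's proof essentially line for line: write $A_j=L(\psi_j)$ and use Proposition~\ref{prop:5.1} for the cross term, identify the constant as $c=\tfrac12\int_0^T\langle|\nabla U|^2,\tilde\mu_t\rangle\,dt$ from $\mu_t=\operatorname{Law}(X_t,\nabla U(t,X_t))$, and complete the square. Applying the It\^o identity to $w=v_\alpha$ instead of to each $\psi_j$ is the same argument by linearity, and your integrability remarks are the ones the paper relies on: the regularity is carried by $v_\alpha$, never by $U$.
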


\begin{proof}
Proposition~\ref{prop:5.1} with $w=\psi_j$ gives
$A_j=L(\psi_j)=\int_0^T\langle\nabla U\cdot\nabla\psi_j,\tilde\mu_t\rangle\,dt$.
The constant in Theorem~\ref{thm:4.3} is
$c=\tfrac12\int_0^T\langle|\nabla U|^2,\tilde\mu_t\rangle\,dt$, since the
observed control is $\nabla U(t,X_t)$.
Expanding the square in \eqref{eq:ls-gap} gives
$\tfrac12\alpha^{\mathsf T}B\alpha+\alpha^{\mathsf T}A+c$.
\end{proof}

The estimator is thus the least-squares projection of the drift $\nabla U$
onto $\{\nabla v_\alpha\}$ in $L^2(\tilde\mu_t(dx)\,dt)$, computed through
the weak form without knowledge of $\nabla U$.
The quadratic form of Theorem~\ref{thm:4.3} and the identity
\eqref{eq:ls-gap} are the entropic action-matching loss and action-gap
identity of \cite{nek-etal:2023} under $s=-v_\alpha$.  Thus the new point
is not the population identity itself: the inverse-control
parametrization and finite Galerkin class make the empirical objective
an exactly solvable quadratic problem, rather than a general neural
optimization problem.

\begin{cor}[C\'ea-type best-approximation identity]
\label{thm:5.2}
Under {\rm (H1)}, {\rm (P1)} and {\rm (P2)},
\begin{equation}
\label{eq:5.5}
 \|v_{\alpha^*} - v^\ast\|_{a} = \inf_{\beta\in\mathbb{R}^J}\|v_\beta - v^\ast\|_{a}.
\end{equation}
\end{cor}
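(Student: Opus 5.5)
The identity follows by combining Proposition~\ref{prop:ls} with the minimizing property of $\alpha^*$ from \eqref{eq:4.8}. By Proposition~\ref{prop:ls}, for every $\beta\in\mathbb{R}^J$,
\[
 \tfrac12\|v_\beta-v^\ast\|_a^2 = V(f_\beta,g_\beta)=\tfrac12\beta^{\mathsf T}B\beta+\beta^{\mathsf T}A+c .
\]
Under (P1)--(P2), Theorem~\ref{thm:4.3} shows that $B$ is positive definite. Hence $\alpha^*=-B^{-1}A$ is the unique global minimizer of this quadratic over $\mathbb{R}^J$. It follows that
\[
 \|v_{\alpha^*}-v^\ast\|_a^2=\min_\beta\|v_\beta-v^\ast\|_a^2 .
\]
Taking square roots, which is monotone on $[0,\infty)$, gives \eqref{eq:5.5}, with the infimum attained at $\beta=\alpha^*$.

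The only point that needs care is that $\|v_\beta-v^\ast\|_a$ is finite and well defined on the quotient by $\mathcal{N}_a$. This holds because $\nabla v^\ast=-\nabla U\in L^2(\tilde\mu_t(dx)\,dt)$ by (H1), and $\nabla\psi_j$ has linear growth by (P1) while $\tilde\mu_t$ has uniformly bounded second moments. So every quantity involved is finite, and Proposition~\ref{prop:ls} applies without further hypotheses.

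For a Galerkin-style check, one can also argue through orthogonality. The Galerkin equations \eqref{eq:5.4} together with Proposition~\ref{prop:5.1} give $a(v_{\alpha^*}-v^\ast,\psi_j)=0$ for all $j$. Expanding then yields the Pythagorean identity
\[
 \|v_\beta-v^\ast\|_a^2=\|v_{\alpha^*}-v^\ast\|_a^2+\|v_\beta-v_{\alpha^*}\|_a^2 ,
\]
which again gives \eqref{eq:5.5}.

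There is no real obstacle. The only subtle step, identifying $A_j$ with $L(\psi_j)=a(U,\psi_j)$, is already contained in Proposition~\ref{prop:ls}.
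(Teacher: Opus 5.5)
Your proof is correct and matches the paper's: $\alpha^*$ minimizes $V(f_\alpha,g_\alpha)$ by Theorem~\ref{thm:4.3}, and Proposition~\ref{prop:ls} turns that minimization into minimizing $\|v_\alpha-v^\ast\|_a$. Your supplementary Galerkin-orthogonality check via \eqref{eq:5.4} and Proposition~\ref{prop:5.1} is also valid, and it gives the slightly stronger Pythagorean identity, which is why the C\'ea constant equals one here.
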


\begin{proof}
By Theorem~\ref{thm:4.3}, $\alpha^*$ minimizes $V(f_\alpha,g_\alpha)$ over
$\alpha\in\mathbb{R}^J$, and by Proposition~\ref{prop:ls} this is the
minimization of $\|v_\alpha-v^\ast\|_a$.
\end{proof}

\begin{cor}[Complete-basis limit]
\label{cor:complete}
Assume {\rm (H1)}, let $\psi_1,\psi_2,\ldots$ satisfy {\rm (P1)}, and let
$\{\psi_j\}_{j=1}^J$ satisfy {\rm (P2)} for every $J$.
Let $\hat U_J$ be the estimate \eqref{eq:4.9} computed from the
population quantities $A$ and $B$ with the first $J$ basis functions.
If $\nabla U$ lies in the $L^2(\tilde\mu_t(dx)\,dt)$-closure of
$\{\nabla v:\ v\in\mathrm{span}\{\psi_j\}_{j\ge1}\}$, then
$\int_0^T\langle|\nabla\hat U_J-\nabla U|^2,\tilde\mu_t\rangle\,dt\to0$
as $J\to\infty$.
\end{cor}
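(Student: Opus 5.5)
The plan is to reduce the statement to the best-approximation identity of Corollary~\ref{thm:5.2} and then use nestedness of the Galerkin spaces. Write $S_J=\mathrm{span}\{\psi_1,\dots,\psi_J\}$ and $v^\ast=-U$. Then $\nabla\hat U_J-\nabla U=-(\nabla v_{\alpha^*_J}-\nabla v^\ast)$, where $\alpha^*_J=-B_J^{-1}A_J$ is the population minimizer for the first $J$ basis functions. This is well defined because (P2) holds for every $J$, so $B_J$ is positive definite by Theorem~\ref{thm:4.3}. Hence the quantity to control is exactly $\|v_{\alpha^*_J}-v^\ast\|_a^2$. By Corollary~\ref{thm:5.2},
\[
 \|v_{\alpha^*_J}-v^\ast\|_a=\inf_{w\in S_J}\|w-v^\ast\|_a=:d_J .
\]
Since $S_J\subset S_{J+1}$, the sequence $d_J$ is nonincreasing. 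It therefore suffices to show $\inf_J d_J=0$.

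The $L^2(\tilde\mu_t(dx)\,dt)$ norm of a gradient difference is precisely the energy seminorm: $\|w-v^\ast\|_a^2=\int_0^T\langle|\nabla w+\nabla U|^2,\tilde\mu_t\rangle\,dt$. The closure hypothesis states that for every $\varepsilon>0$ there is some $w\in\mathrm{span}\{\psi_j\}_{j\ge1}$ with $\int_0^T\langle|\nabla w-\nabla U|^2,\tilde\mu_t\rangle dt<\varepsilon^2$. Replacing $w$ by $-w$, which lies in the same span, gives $\|(-w)-v^\ast\|_a<\varepsilon$. Any finite linear combination involves only finitely many $\psi_j$, so $-w\in S_{J_0}$ for some $J_0$. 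Thus $d_J\le d_{J_0}<\varepsilon$ for all $J\ge J_0$, and $d_J\to0$, which is the claim.

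No step is a genuine obstacle. The only care needed is bookkeeping:
\begin{itemize}
\item the sign convention $v^\ast=-U$, so that $\nabla v_\beta$ approximating $-\nabla U$ is equivalent to $\nabla(-v_\beta)$ approximating $\nabla U$;
\item the requirement that Corollary~\ref{thm:5.2} be applied at each fixed $J$ with its hypotheses (H1), (P1), (P2) in force for the truncated family;
\item the observation that the infimum over $\beta\in\mathbb{R}^J$ equals the infimum over $S_J$, by definition of $v_\beta$.
\end{itemize}
Finiteness of $\|v^\ast\|_a$ is also needed to make $d_J$ finite, and it follows from (H1), which gives $\mathbb{E}\int_0^T|\nabla U(t,X_t)|^2dt<\infty$.
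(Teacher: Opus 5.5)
Your proposal is correct and follows the paper's proof: apply Corollary~\ref{thm:5.2} at each fixed $J$ to identify the error with $\inf_{\beta\in\mathbb{R}^J}\|v_\beta-v^\ast\|_a^2$, use nestedness of the spans for monotonicity, and use the closure hypothesis to drive the infimum to zero, noting that the span is closed under $w\mapsto -w$ and that each combination involves only finitely many $\psi_j$. Your checks on the sign convention $v^\ast=-U$ and on finiteness of $\|v^\ast\|_a$ under (H1) spell out what the paper's one-line proof leaves implicit.
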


\begin{proof}
By Corollary~\ref{thm:5.2} this quantity equals
$\inf_{\beta\in\mathbb{R}^J}\|v_\beta-v^\ast\|_a^2$, which is
nonincreasing in $J$ and tends to zero by the closure assumption.
\end{proof}

\paragraph{Ill-posedness and the role of $J$.}
Identifying $\nabla U$ from occupancy data amounts to differentiating
the observed marginals, and the problem is ill-posed.
For scalar diffusions observed at a fixed sampling interval, the
minimax rate of drift estimation is that of a second-order linear
inverse problem \cite{gobet-etal:2004}.
The weak form \eqref{eq:4.6}--\eqref{eq:4.7} moves the derivatives onto
the basis functions, so the data enters only through smooth integrals
and no derivative of an empirical measure is taken.
The ill-posedness reappears in the stability constant.
Recovering $\alpha^*$ from $A$ and $B$ amplifies their error by
$1/c_B$ with $c_B=\sigma_{\min}(B)$, and $c_B$ degrades as the basis is
refined, since $\{\nabla\psi_j\}$ then approaches linear dependence.
The finite basis is therefore the regularization of the inverse problem,
with $J$ in the role of the regularization parameter and the Tikhonov
shift $\lambda$ as a secondary stabilizer.
Corollary~\ref{cor:total} balances the two effects and
Proposition~\ref{prop:choice} turns the balance into a choice of $J$
with a convergence rate.
Source conditions and maximum a posteriori estimates for the same
identification problem are studied in \cite{tenbrinck-etal:2025}.

\paragraph{Statistical error decomposition.}
Let $X^{(1)},\ldots,X^{(N)}$ be independent copies of the solution of
\eqref{eq:1.1} under (H1), all observed at the common deterministic
times $0=t_0<t_1<\cdots<t_K=T$, and let
$\Delta t:=\max_k(t_{k+1}-t_k)$.
Replace the population marginals $\tilde\mu_t$ by the empirical
marginals $\hat\mu^N_{t_k}=N^{-1}\sum_{i=1}^N\delta_{X_{t_k}^{(i)}}$, and
the time integrals $\int_0^T f(t)\,dt$ in \eqref{eq:4.6}--\eqref{eq:4.7}
by the quadrature sums $\sum_{k=0}^K w_k f(t_k)$.
We assume that $w_k\ge0$, $\sum_kw_k=T$, and that the rule has order $p$
on these nodes, that is,
\begin{equation}
\label{eq:quad-order}
 \Bigl|\sum_{k=0}^K w_k\varphi(t_k)-\int_0^T\varphi(t)\,dt\Bigr|
 \le C_Q\,T\,\|\varphi^{(p)}\|_\infty\,(\Delta t)^p,
 \qquad \varphi\in C^p([0,T]).
\end{equation}
The trapezoidal rule satisfies these conditions with $p=2$ on any nodes.
Nonnegative rules of higher order need more structure of the nodes.
Let $\hat A,\hat B$ denote the resulting empirical estimates of $A,B$.
For a matrix $M$, write $\sigma_{\min}(M)$ for its smallest singular
value and $\|M\|_{\rm op}$ for its operator norm.

\begin{thm}[Finite-sample statistical error bounds]
\label{thm:5.3}
Assume {\rm (H1)}, {\rm (P1)}, {\rm (P2)}, the sampling and quadrature
conditions above, and the following conditions, where
$\zeta_j=-\partial_t\psi_j-\tfrac{\sigma^2}{2}\Delta\psi_j$ and
$Y_t\sim\tilde\mu_t$.
\begin{itemize}[leftmargin=2em]
\item[(S1)] $\kappa:=\max_j\sup_{t\in[0,T]}\max\bigl\{
\|\zeta_j(t,Y_t)\|_{\psi_1}^2,\ \|\psi_j(t,Y_t)\|_{\psi_1}^2\bigr\}<\infty$,
where $\|Z\|_{\psi_1}:=\inf\{c>0:\mathbb{E}\exp(|Z|/c)\le2\}$.
\item[(S2)] The i.i.d.\ matrices
$M_i:=\bigl[\sum_kw_k(\nabla\psi_j)^{\mathsf T}\nabla\psi_\ell
(t_k,X^{(i)}_{t_k})\bigr]_{j,\ell=1}^J$ satisfy
\[
 b_{\rm op}:=\operatorname*{ess\,sup}
 \|M_1-\mathbb{E}M_1\|_{\rm op}<\infty.
\]
\item[(S3)] The functions $t\mapsto\langle \zeta_j(t,\cdot),\tilde\mu_t\rangle$
and $t\mapsto\langle(\nabla\psi_j)^{\mathsf T}\nabla\psi_\ell(t,\cdot),
\tilde\mu_t\rangle$ lie in $C^p([0,T])$, and $D_p$ is the largest
sup-norm of their $p$-th derivatives.
\end{itemize}
Then there exists absolute constants $C$ and $N_0$ such that, for
$N\ge N_0\log(JN)$, the two inequalities
\begin{align}
\label{eq:5.6}
 |\hat A - A| &\le \varepsilon_A := C(1+T)\sqrt{J\kappa\log (JN)/N}
   + \sqrt{J}\,C_Q T D_p(\Delta t)^p, \\
 \|\hat B - B\|_{\rm op} &\le \varepsilon_B := Cb_{\rm op}\sqrt{\log(JN)/N}
   + J\,C_Q T D_p(\Delta t)^p \notag
\end{align}
hold simultaneously with probability at least $1-6/N$.
If moreover $\varepsilon_B\le c_B/2$ with $c_B:=\sigma_{\min}(B)>0$,
then on this event $\sigma_{\min}(\hat B)\ge c_B/2$, and the regularized
estimator $\hat\alpha^* = -(\hat B + \lambda I)^{-1}\hat A$ with
$\lambda\ge0$ satisfies
\begin{equation}
\label{eq:5.7}
 |\hat\alpha^* - \alpha^*|
 \le \frac{2(\varepsilon_A+\varepsilon_B|\alpha^*|)}{c_B}
 + \frac{2\lambda|\alpha^*|}{c_B+2\lambda}.
\end{equation}
The constant $D_p$ depends on $U$, $\sigma$ and the basis.
\end{thm}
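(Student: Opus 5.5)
The plan is to split each of $\hat A-A$ and $\hat B-B$ into a sampling part and a quadrature part. Introduce the intermediate quantities $\bar A,\bar B$ obtained by keeping the population marginals $\tilde\mu_{t_k}$ but replacing $\int_0^T$ by $\sum_k w_k$. Then
\[
 \hat A-A=(\hat A-\bar A)+(\bar A-A),\qquad \hat B-B=(\hat B-\bar B)+(\bar B-B).
\]
The quadrature parts are deterministic. By (S3) and \eqref{eq:quad-order}, each entry of $\bar A-A$ is at most $C_QTD_p(\Delta t)^p$; the boundary terms $\langle\psi_j(T),\tilde\mu_T\rangle-\langle\psi_j(0),\tilde\mu_0\rangle$ are exact and contribute nothing. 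Summing $J$ squared entries gives the factor $\sqrt J$ in $\varepsilon_A$. For $\bar B-B$, the operator norm is bounded by the Frobenius norm, and $J^2$ entries give the factor $J$ in $\varepsilon_B$.

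For the sampling part of $A$, note that $\hat A_j-\bar A_j=N^{-1}\sum_i(Z_{ij}-\mathbb{E}Z_{ij})$. Here
\[
 Z_{ij}=\sum_kw_k\zeta_j(t_k,X^{(i)}_{t_k})+\psi_j(T,X^{(i)}_T)-\psi_j(0,X^{(i)}_0).
\]
Since $\|\cdot\|_{\psi_1}$ is a norm, $w_k\ge0$ and $\sum_kw_k=T$, we get $\|Z_{ij}\|_{\psi_1}\le(T+2)\sqrt\kappa$ by (S1). Bernstein's inequality for sub-exponential variables gives, for each $j$, a deviation $C(1+T)\sqrt{\kappa}\bigl(\sqrt{s/N}+s/N\bigr)$ with probability at least $1-2e^{-s}$. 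Choose $s=\log(JN)+\log 2$, so the union bound over $j$ fails with probability at most $4/N$. The condition $N\ge N_0\log(JN)$ makes $s/N\le\sqrt{s/N}$. Summing $J$ squared coordinates then gives $C(1+T)\sqrt{J\kappa\log(JN)/N}$.

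For the sampling part of $B$, we have $\hat B-\bar B=N^{-1}\sum_i(M_i-\mathbb{E}M_i)$, a sum of bounded, centered, symmetric i.i.d.\ matrices by (S2). The matrix Bernstein inequality applies with variance proxy at most $b_{\rm op}^2/N$ and uniform bound $b_{\rm op}/N$. Taking the deviation level $Cb_{\rm op}\sqrt{\log(JN)/N}$ makes the failure probability $2J\exp(-c\log(JN))\le2/N$, again using $N\ge N_0\log(JN)$ to absorb the linear term. A union bound gives total probability at least $1-6/N$.

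For the coefficient bound, Weyl's inequality gives $\sigma_{\min}(\hat B)\ge c_B-\varepsilon_B\ge c_B/2$. Since $\hat B$ is symmetric positive semidefinite, $\hat B+\lambda I$ has smallest eigenvalue at least $c_B/2+\lambda$. Write $B_\lambda=\hat B+\lambda I$. Using $B\alpha^*=-A$,
\[
 \hat\alpha^*-\alpha^*=-B_\lambda^{-1}\bigl(\hat A-A+(\hat B-B)\alpha^*+\lambda\alpha^*\bigr).
\]
Hence
\[
 |\hat\alpha^*-\alpha^*|\le\frac{\varepsilon_A+\varepsilon_B|\alpha^*|+\lambda|\alpha^*|}{c_B/2+\lambda}.
\]
For the first two terms, bound the denominator below by $c_B/2$; for the last, note $\lambda/(c_B/2+\lambda)=2\lambda/(c_B+2\lambda)$. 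This gives \eqref{eq:5.7}.

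The main obstacle is the probabilistic bookkeeping: making sure the sub-exponential constant of $Z_{ij}$ is controlled uniformly, including the quadrature sum and the boundary terms, and fixing the Bernstein and matrix Bernstein thresholds so that the linear-in-$s/N$ terms are absorbed under $N\ge N_0\log(JN)$ while the failure probabilities sum to $6/N$.
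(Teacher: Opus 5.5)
Your proposal is correct and follows essentially the same route as the paper's proof. The shared steps are a deterministic quadrature/sampling split, entrywise Bernstein at deviation level $\log(JN)$ with a union bound over the $J$ coordinates, and matrix Bernstein for $\hat B$ with a Frobenius bound on its quadrature part. The coefficient bound is likewise obtained from Weyl's inequality and the three-term perturbation identity for $\hat\alpha^*-\alpha^*$.

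The only difference is cosmetic. You fold the boundary terms $\psi_j(T,X_T^{(i)})-\psi_j(0,X_0^{(i)})$ into a single sub-exponential variable with $\psi_1$-norm at most $(T+2)\sqrt\kappa$, with the centering constant absorbed into $C$. The paper instead treats them as a separate Bernstein event, which changes only the failure-probability bookkeeping.
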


\begin{rem}[On the constants]
The stability constant $c_B=\sigma_{\min}(B)$ deteriorates as the basis
gradients approach linear dependence, as for finite-element stiffness
matrices \cite{fried:1973,ern-guermond:2004}.  The Tikhonov shift keeps
$\sigma_{\min}(\hat B+\lambda I)\ge\lambda$, at the cost of the last term
in \eqref{eq:5.7}.  For bounded basis functions and derivatives, as for
the Gaussian RBFs of \S\ref{sec:9}, (S1)--(S2) hold with
$b_{\rm op}\le2TJ\max_j\sup|\nabla\psi_j|^2$; unbounded gradients require
stronger tail and matrix-moment conditions.  Condition (S3) concerns
smoothness of $U$ and the basis, not of $\tilde\mu_0$.  The generator
identity
$\frac{d}{dt}\langle\varphi,\tilde\mu_t\rangle
=\langle(\partial_t+\mathcal L_t)\varphi,\tilde\mu_t\rangle$
shows that $C^{p+1}$ time profiles suffice for the Gaussian basis when
the required derivatives of $U$ have polynomial growth.  Finally,
$\kappa,b_{\rm op},D_p,c_B$ and $|\alpha^*|$ depend on $J$; hence
\eqref{eq:5.6}--\eqref{eq:5.7} are fixed-basis bounds.  With the displayed
bound on $b_{\rm op}$,
$\varepsilon_B=O(J\sqrt{\log(JN)/N}+J(\Delta t)^p)$, so increasing $J$
requires more samples, finer time resolution, or regularization.
\end{rem}

\begin{proof}[Proof of Theorem $\ref{thm:5.3}$]
Throughout, $C$ denotes an absolute constant whose value may change
from line to line.

Step (i).
For each $j$, $\hat A_j - A_j = S^A_{N,j} + Q^A_{p,j} + B^A_{N,j}$, where
\begin{align*}
 S^A_{N,j} &:= \sum_{k=0}^K w_k\,\bigl\langle \zeta_j(t_k,\cdot),\,
   \hat\mu^N_{t_k} - \tilde\mu_{t_k}\bigr\rangle,
\\
 Q^A_{p,j} &:= \sum_{k=0}^K w_k\,\langle \zeta_j(t_k,\cdot),\tilde\mu_{t_k}\rangle
 - \int_0^T\langle \zeta_j(t,\cdot),\tilde\mu_t\rangle\, dt,
\\
 B^A_{N,j} &:= \bigl\langle\psi_j(T,\cdot),\,\hat\mu^N_T - \tilde\mu_T\bigr\rangle
 - \bigl\langle\psi_j(0,\cdot),\,\hat\mu^N_0 - \tilde\mu_0\bigr\rangle.
\end{align*}

Step (ii).
The term $S^A_{N,j}$ is the centered average of the i.i.d.\ variables
$Z_i^{(j)} := \sum_k w_k\, \zeta_j(t_k, X^{(i)}_{t_k})$.
Since $w_k\ge0$ and $\sum_kw_k=T$, the triangle inequality for the
$\psi_1$-norm and (S1) give $\|Z_i^{(j)}\|_{\psi_1}\le T\sqrt{\kappa}$,
and the centering inequality \cite[Exer.~2.7.10]{vershynin:2018} gives
$\|Z_i^{(j)}-\mathbb{E}Z_i^{(j)}\|_{\psi_1}\le CT\sqrt{\kappa}$.
Bernstein's inequality \cite[Thm.~2.8.1]{vershynin:2018} then gives
$|S^A_{N,j}|\le CT\sqrt{\kappa\log (JN)/N}$ with probability at least
$1-2/(JN)$, since for $N\ge N_0\log(JN)$ this deviation lies in the
sub-Gaussian branch of the inequality.

Step (iii).
By \eqref{eq:quad-order} and (S3), $|Q^A_{p,j}|\le C_QTD_p(\Delta t)^p$.
The term $B^A_{N,j}$ is the centered average of the i.i.d.\ variables
$\psi_j(T,X^{(i)}_T)-\psi_j(0,X^{(i)}_0)$, whose $\psi_1$-norm is at most
$2\sqrt{\kappa}$ by (S1).
As in Step (ii), $|B^A_{N,j}|\le C\sqrt{\kappa\log (JN)/N}$ with
probability at least $1-2/(JN)$.
A union bound over these two events and the $J$ entries, together with
$|\hat A-A|\le\sqrt{J}\max_j|\hat A_j-A_j|$, gives the first inequality
of \eqref{eq:5.6} with probability at least $1-4/N$.
Taking the deviation level at $\log(JN)$ rather than at $\log N$ is what
keeps the $J$ entries free of charge in the union bound.

Step (iv).
Write $\hat B-B=N^{-1}\sum_{i=1}^N(M_i-\mathbb{E}M_i)+Q^B$ with
$Q^B_{j\ell}:=\sum_k w_k\langle(\nabla\psi_j)^{\mathsf T}\nabla\psi_\ell
(t_k,\cdot),\tilde\mu_{t_k}\rangle-\int_0^T\langle(\nabla\psi_j)^{\mathsf T}
\nabla\psi_\ell(t,\cdot),\tilde\mu_t\rangle\,dt$.
One matrix $M_i$ is formed per trajectory, so no union bound over the
quadrature nodes is needed.
By (S2) the matrices $M_i-\mathbb{E}M_i$ satisfy the Bernstein condition,
and \cite[Thm.~6.17]{wainwright:2019} gives
$\|N^{-1}\sum_i(M_i-\mathbb{E}M_i)\|_{\rm op}\le Cb_{\rm op}\sqrt{\log(JN)/N}$
with probability at least $1-2/N$ for $N\ge N_0\log(JN)$.
By \eqref{eq:quad-order} and (S3),
$\|Q^B\|_{\rm op}\le\|Q^B\|_F\le J\,C_QTD_p(\Delta t)^p$.
This gives the second inequality of \eqref{eq:5.6}, and intersecting
with the event of Step (iii) gives probability at least $1-6/N$.

Step (v).
Since $\alpha^* = -B^{-1}A$,
\[
 \hat\alpha^* - \alpha^*
 = -(\hat B+\lambda I)^{-1}(\hat A - A)
 - (\hat B+\lambda I)^{-1}(\hat B - B)\alpha^*
 - \lambda(\hat B+\lambda I)^{-1}\alpha^*.
\]
The matrix $\hat B$ is symmetric and positive semidefinite because
$w_k\ge0$.
On the event above, Weyl's inequality and $\varepsilon_B\le c_B/2$ give
$\sigma_{\min}(\hat B)\ge c_B-\varepsilon_B\ge c_B/2$, so
$\|(\hat B+\lambda I)^{-1}\|_{\rm op}\le 2/(c_B+2\lambda)\le 2/c_B$.
The three terms are therefore bounded by $2\varepsilon_A/c_B$,
$2\varepsilon_B|\alpha^*|/c_B$ and $2\lambda|\alpha^*|/(c_B+2\lambda)$,
which gives \eqref{eq:5.7}.
\end{proof}

\begin{cor}[Total energy-norm error bound]
\label{cor:total}
Under the assumptions of Theorem~\ref{thm:5.3} and $\varepsilon_B\le
c_B/2$, on the event of that theorem,
\begin{equation}
\label{eq:total-error}
 \|v_{\hat\alpha^*}-v^\ast\|_a
 \le \inf_{\beta\in\mathbb{R}^J}\|v_\beta-v^\ast\|_a
 + \|B\|_{\rm op}^{1/2}
   \Bigl(\frac{2(\varepsilon_A+\varepsilon_B|\alpha^*|)}{c_B}
   + \frac{2\lambda|\alpha^*|}{c_B+2\lambda}\Bigr).
\end{equation}
\end{cor}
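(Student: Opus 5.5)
The plan is a triangle inequality in the energy seminorm, with $v_{\alpha^*}$ as the intermediate point, followed by a conversion from coefficient error to energy error through $B$. First,
\[
 \|v_{\hat\alpha^*}-v^\ast\|_a\le\|v_{\alpha^*}-v^\ast\|_a+\|v_{\hat\alpha^*}-v_{\alpha^*}\|_a .
\]
The triangle inequality holds for $\|\cdot\|_a$ because it is a seminorm induced by the positive semidefinite symmetric form $a$ (Cauchy--Schwarz for $a$). By Corollary~\ref{thm:5.2}, the first term equals $\inf_{\beta\in\mathbb{R}^J}\|v_\beta-v^\ast\|_a$, which is exactly the first term of \eqref{eq:total-error}. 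Note that $\alpha^*$ here is the population minimizer, defined from $A$ and $B$, and this is precisely the object the best-approximation identity refers to.

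For the second term, we use linearity of $\alpha\mapsto v_\alpha$. This gives $v_{\hat\alpha^*}-v_{\alpha^*}=v_{\delta}$ with $\delta:=\hat\alpha^*-\alpha^*$. By the definitions \eqref{eq:4.7} and \eqref{eq:5.1}, $a(v_\delta,v_\delta)=\delta^{\mathsf T}B\delta$. Since $B$ is symmetric positive definite by Theorem~\ref{thm:4.3}, the Rayleigh quotient bound gives $\delta^{\mathsf T}B\delta\le\|B\|_{\rm op}|\delta|^2$. Hence
\[
 \|v_{\hat\alpha^*}-v_{\alpha^*}\|_a\le\|B\|_{\rm op}^{1/2}|\hat\alpha^*-\alpha^*| .
\]

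Finally, we work on the event of Theorem~\ref{thm:5.3} and use the hypothesis $\varepsilon_B\le c_B/2$. There, estimate \eqref{eq:5.7} bounds $|\hat\alpha^*-\alpha^*|$ by the bracketed expression in \eqref{eq:total-error}. Substituting this bound completes the argument.

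No step is a genuine obstacle. The only point needing care is that the energy form is evaluated with the \emph{population} matrix $B$ and not $\hat B$, which is why $\|B\|_{\rm op}$, and not $\|\hat B\|_{\rm op}$, appears in the bound. We must also check that $v_\delta$ lies in the energy class, which follows from (P1) together with the finite second moments given by (H1).
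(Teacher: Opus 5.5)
Your proposal is correct and follows the paper's proof essentially step for step. You use the triangle inequality through $v_{\alpha^*}$, the best-approximation identity of Corollary~\ref{thm:5.2}, the bound $\|v_\gamma\|_a^2=\gamma^{\mathsf T}B\gamma\le\|B\|_{\rm op}|\gamma|^2$, and finally the coefficient bound \eqref{eq:5.7} of Theorem~\ref{thm:5.3}.
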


\begin{proof}
By the triangle inequality and Corollary~\ref{thm:5.2},
$\|v_{\hat\alpha^*}-v^\ast\|_a
\le\inf_{\beta}\|v_\beta-v^\ast\|_a+\|v_{\hat\alpha^*-\alpha^*}\|_a$.
Since $\|v_\gamma\|_a^2=\gamma^{\mathsf T}B\gamma$, the second term is at
most $\|B\|_{\rm op}^{1/2}|\hat\alpha^*-\alpha^*|$, which
Theorem~\ref{thm:5.3} bounds.
\end{proof}

\begin{rem}[Choice of the basis dimension]
\label{rem:choice-J}
Equation~\eqref{eq:total-error} balances decreasing approximation error
against an estimation term that typically grows with $J$.  Total-error
consistency requires a choice $J=J(N)$ for which the latter vanishes and
$\varepsilon_B\le c_B/2$.  Because its quadrature component is
$J C_QTD_p(\Delta t)^p$ while $c_B$ usually decreases, a growing-basis
limit generally also requires $\Delta t\to0$.  If the process starts
from its stationary distribution and both $U$ and the basis are
time-independent, the moment integrands in (S3) are constant in time.
Hence $D_p=0$, the quadrature term vanishes, and no refinement of
$\Delta t$ is required for this purpose.  Proposition~\ref{prop:choice}
makes this balance explicit for polynomially controlled basis families.
\end{rem}

\begin{prop}[Growing-basis error bound]
\label{prop:choice}
Assume the hypotheses of Theorem~\ref{thm:5.3} with $D_p=0$, take
$\lambda=0$, and let a family of bases indexed by $J$ satisfy, with
positive constants $c_1,\ldots,c_6$ independent of $J$ and exponents
$a,b,p_\kappa,p_b,m\ge0$ and $s>0$,
\begin{equation}
\label{eq:poly-family}
 c_B\ge c_1J^{-a},\quad \|B\|_{\rm op}\le c_2J^{b},\quad
 \kappa\le c_3J^{p_\kappa},\quad b_{\rm op}\le c_4J^{p_b},\quad
 |\alpha^*|\le c_5J^{m},
\end{equation}
together with $\inf_{\beta}\|v_\beta-v^\ast\|_a\le c_6J^{-s}$.
Put $q=\tfrac{b}{2}+a+\max\bigl(\tfrac{1+p_\kappa}{2},\,p_b+m\bigr)$.
Then there is a constant $C$ such that, whenever
$\varepsilon_B\le c_B/2$, on the event of Theorem~\ref{thm:5.3},
\begin{equation}
\label{eq:choice-bound}
 \|v_{\hat\alpha^*}-v^\ast\|_a
 \le c_6J^{-s}+C\,J^{q}\sqrt{\log(JN)/N}.
\end{equation}
\end{prop}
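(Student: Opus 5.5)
The plan is to substitute the polynomial controls \eqref{eq:poly-family} into Corollary~\ref{cor:total} and simplify. With $\lambda=0$ the Tikhonov term in \eqref{eq:total-error} vanishes. With $D_p=0$ the quadrature parts of $\varepsilon_A$ and $\varepsilon_B$ in \eqref{eq:5.6} vanish as well. On the event of Theorem~\ref{thm:5.3}, and under $\varepsilon_B\le c_B/2$, the corollary therefore gives
\[
 \|v_{\hat\alpha^*}-v^\ast\|_a
 \le \inf_{\beta}\|v_\beta-v^\ast\|_a
 + 2\|B\|_{\rm op}^{1/2}\,c_B^{-1}\bigl(\varepsilon_A+\varepsilon_B|\alpha^*|\bigr),
\]
with $\varepsilon_A=C(1+T)\sqrt{J\kappa\log(JN)/N}$ and $\varepsilon_B=Cb_{\rm op}\sqrt{\log(JN)/N}$. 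The approximation term is bounded by $c_6J^{-s}$ directly from the hypothesis.

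Next, bound the estimation term power by power. We have $\|B\|_{\rm op}^{1/2}\le c_2^{1/2}J^{b/2}$ and $c_B^{-1}\le c_1^{-1}J^{a}$. For the two noise terms, $\varepsilon_A\le C(1+T)c_3^{1/2}J^{(1+p_\kappa)/2}\sqrt{\log(JN)/N}$ and $\varepsilon_B|\alpha^*|\le Cc_4c_5J^{p_b+m}\sqrt{\log(JN)/N}$. Their sum is at most a constant times $J^{\max((1+p_\kappa)/2,\,p_b+m)}\sqrt{\log(JN)/N}$, since $J\ge1$ and each power is dominated by the larger exponent. Multiplying the three factors gives exactly $J^{q}$ with $q=\tfrac b2+a+\max(\cdot,\cdot)$. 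The resulting constant $C$ collects the absolute constants, $c_1,\dots,c_5$ and $T$, none of which depend on $J$.

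There is no real obstacle here; the result is bookkeeping on top of Corollary~\ref{cor:total}. The one point to check is that the absolute constants $C$ and $N_0$ in Theorem~\ref{thm:5.3} do not depend on $J$, so that the $J$-dependence enters only through $\kappa$, $b_{\rm op}$, $c_B$, $\|B\|_{\rm op}$ and $|\alpha^*|$. The theorem states this explicitly, so it suffices to note it. The condition $\varepsilon_B\le c_B/2$ is kept as a hypothesis rather than derived, as the statement allows.
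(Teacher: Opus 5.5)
Your proposal is correct and follows the paper's proof exactly: set $\lambda=0$ and $D_p=0$ in Corollary~\ref{cor:total}, then insert the polynomial bounds \eqref{eq:poly-family} factor by factor to obtain $J^{q}\sqrt{\log(JN)/N}$. You also spell out the step $J^{(1+p_\kappa)/2}+J^{p_b+m}\le 2J^{\max}$ for $J\ge1$, which the paper leaves implicit.
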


\begin{proof}
With $D_p=0$ the quadrature terms in \eqref{eq:5.6} vanish, so
$\varepsilon_A\le C(1+T)J^{(1+p_\kappa)/2}\sqrt{\log(JN)/N}$ and
$\varepsilon_B\le Cc_4J^{p_b}\sqrt{\log(JN)/N}$ by
\eqref{eq:poly-family}.
Substituting these together with
$\|B\|_{\rm op}^{1/2}\le c_2^{1/2}J^{b/2}$ and
$c_B^{-1}\le c_1^{-1}J^{a}$ into Corollary~\ref{cor:total} with
$\lambda=0$ gives \eqref{eq:choice-bound}.
\end{proof}

\paragraph{Consequence.}
The choice $J=J(N)=\lceil(N/\log N)^{1/(2(q+s))}\rceil$ balances the two
terms in \eqref{eq:choice-bound}, which are then both of order
$(N/\log N)^{-s/(2(q+s))}$.
Since $m\ge0$ we have $p_b+a\le q$, so
$\varepsilon_B/c_B\le CJ^{p_b+a}\sqrt{\log(JN)/N}\to0$, which gives
$\varepsilon_B\le c_B/2$ for $N$ large, and $N\ge N_0\log(J(N)N)$ then
holds as well, so the event of Theorem~\ref{thm:5.3} applies.
Consequently,
\[
 \|v_{\hat\alpha^*}-v^\ast\|_a
 =O\bigl((N/\log N)^{-s/(2(q+s))}\bigr)
\]
with probability at least $1-6/N$.

\begin{ex}[Periodic cubic splines on a refined mesh]
\label{ex:splines}
Consider the periodic one-dimensional analogue of
Proposition~\ref{prop:choice}.  Let $\Omega$ be a one-dimensional torus,
let $\pi$ be bounded above and away from zero, and let $S_J$ be the space of
periodic cubic splines on a uniform mesh of width
$\Delta\asymp J^{-1}$.  Use orthonormal coordinates on
$S_J/\mathbb{R}$; then (P2) holds.  Standard spline stiffness estimates give
$c_B\gtrsim J^{-1}$ and $\|B\|_{\rm op}\lesssim J$, hence $a=b=1$.
At each point only four original B-splines are nonzero, with
$\sup|\varphi_j'|=O(J)$ and $\sup|\varphi_j''|=O(J^2)$; orthogonal
projection to the reduced coordinates does not increase the corresponding
derivative-vector norms.  Hence $\kappa\lesssim J^4$ and
\[
 b_{\rm op}\le 2T\sup_x\sum_j|\varphi_j'(x)|^2\lesssim J^2,
\]
so $p_\kappa=4$ and $p_b=2$.  Galerkin stability gives
$c_B|\alpha^*|^2\le\|v_{\alpha^*}\|_a^2\le\|U\|_a^2$, hence
$m=1/2$.  For $U\in H^4(\Omega)$ the cubic-spline energy error is
$O(J^{-3})$, so $s=3$ and $q=4$.  Consequently,
\[
 J(N)\asymp(N/\log N)^{1/14},\qquad
 \|v_{\hat\alpha^*}-v^\ast\|_a
 =O\bigl((N/\log N)^{-3/14}\bigr).
\]
\end{ex}

\begin{rem}[The Gaussian family of the experiments]
\label{rem:gauss-cond}
Fixed-bandwidth Gaussians become severely ill-conditioned as their
centers grow dense, as illustrated by Table~\ref{tab:J-sweep}.
We therefore do not invoke the polynomial condition
\eqref{eq:poly-family} for this family; the experiments keep $J$ fixed
and use a Tikhonov shift.
\end{rem}

\begin{rem}[Comparison with transition and density estimators]
\label{rem:5.4}
Euler--Maruyama maximum likelihood carries an $O(\Delta t)$ drift bias
from the discretized transition density, while the time error of
Algorithm~\ref{alg:7-1} is the quadrature error of
Theorem~\ref{thm:5.3}, which vanishes in the stationary case
(\S\ref{sec:6}, \S\ref{sec:9.1}).
Like Algorithm~\ref{alg:7-1}, the log-density estimator has no
$\Delta t$ bias, and Algorithm~\ref{alg:7-1} adds a parametric basis and
the time-dependent extension.
\end{rem}

\paragraph{Observation noise.}
The analysis above treats the recorded positions as exact.
Suppose instead that the records are
$Y^{(i)}_{t_k}=X^{(i)}_{t_k}+\eta^{(i)}_k$, where the $\eta^{(i)}_k$ are
independent of $X$, independent across $i$ and $k$, centered, and have
$\mathbb{E}|\eta^{(i)}_k|^2=\tau^2$.
Let $A^Q$ and $B^Q$ denote the noise-free versions of
\eqref{eq:4.6}--\eqref{eq:4.7} in which the time integrals are replaced
by the quadrature sums.  Let $A^\eta$ and $B^\eta$ be the corresponding
quantities with $\tilde\mu_{t_k}$ replaced by the law of
$Y^{(i)}_{t_k}$, which is the convolution of $\tilde\mu_{t_k}$ with the
noise law.  Assuming $B^Q$ is positive definite, set
$c_B^Q=\sigma_{\min}(B^Q)$,
$\alpha^Q=-(B^Q)^{-1}A^Q$, and, whenever $B^\eta$ is invertible,
$\alpha^\eta=-(B^\eta)^{-1}A^\eta$.
This comparison isolates observation-noise bias from time-quadrature
error.  Notice that (P2), which concerns the continuous-time Gram
matrix, does not by itself guarantee that $B^Q$ is positive definite.

\begin{prop}[Observation-noise perturbation bounds]
\label{prop:noise}
Assume {\rm (H1)}, {\rm (P1)}, and that $B^Q$ is positive definite and
$\psi_j$, $\zeta_j$ and
$(\nabla\psi_j)^{\mathsf T}\nabla\psi_\ell$ have bounded second
derivatives in $x$ with common bound $\Lambda$.
Then
\begin{equation}
\label{eq:noise-entries}
 |A^\eta-A^Q|\le\sqrt{J}\,(T+2)\,\tfrac{\Lambda\tau^2}{2},
 \qquad
 \|B^\eta-B^Q\|_{\rm op}\le J\,T\,\tfrac{\Lambda\tau^2}{2},
\end{equation}
and if $\|B^\eta-B^Q\|_{\rm op}\le c_B^Q/2$, then $B^\eta$ is positive
definite and
\begin{equation}
\label{eq:noise-alpha}
 |\alpha^\eta-\alpha^Q|
 \le\frac{2\bigl(|A^\eta-A^Q|+\|B^\eta-B^Q\|_{\rm op}|\alpha^Q|\bigr)}{c_B^Q}.
\end{equation}
\end{prop}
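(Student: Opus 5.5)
The plan is to reduce everything to one scalar estimate: for a function $\varphi$ with bounded Hessian, $\|D^2\varphi\|\le\Lambda$, and a centered noise $\eta$ independent of $X$ with $\mathbb{E}|\eta|^2=\tau^2$,
\[
 \bigl|\mathbb{E}\varphi(X+\eta)-\mathbb{E}\varphi(X)\bigr|\le\tfrac{\Lambda\tau^2}{2}.
\]
This follows from a second-order Taylor expansion with integral remainder,
\[
 \varphi(x+h)=\varphi(x)+\nabla\varphi(x)\cdot h+\int_0^1(1-s)\,h^{\mathsf T}D^2\varphi(x+sh)h\,ds .
\]
The steps are as follows.
\begin{enumerate}
\item Condition on $X$, using independence. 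The linear term has zero conditional mean because $\eta$ is centered.
\item Bound the remainder by $\tfrac{\Lambda}{2}|h|^2$.
\item Take expectations. Integrability of $\nabla\varphi(X)\cdot\eta$ holds because $\nabla\varphi$ grows at most linearly, given the bounded Hessian, while $X$ and $\eta$ have second moments by (H1) and the noise assumption.
\end{enumerate}
Since the law of $Y_{t_k}$ is the convolution of $\tilde\mu_{t_k}$ with the noise law, $\langle\varphi,\mathrm{Law}(Y_{t_k})\rangle=\mathbb{E}\varphi(X_{t_k}+\eta_k)$, so the lemma applies directly to each pairing.

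Next I apply this entrywise. For $A$,
\[
 A^\eta_j-A^Q_j=\sum_k w_k\bigl(\mathbb{E}\zeta_j(t_k,Y_{t_k})-\mathbb{E}\zeta_j(t_k,X_{t_k})\bigr)+(\text{terminal difference})-(\text{initial difference}).
\]
The sum is bounded by $\sum_kw_k\,\Lambda\tau^2/2=T\Lambda\tau^2/2$, using $w_k\ge0$ and $\sum_kw_k=T$. The two boundary terms involving $\psi_j(T,\cdot)$ and $\psi_j(0,\cdot)$ each contribute at most $\Lambda\tau^2/2$. Hence $|A^\eta_j-A^Q_j|\le(T+2)\Lambda\tau^2/2$, and $|A^\eta-A^Q|\le\sqrt J\max_j|\cdot|$ gives the first bound in \eqref{eq:noise-entries}. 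For $B$, the same argument applied to $(\nabla\psi_j)^{\mathsf T}\nabla\psi_\ell$ gives entrywise bounds of $T\Lambda\tau^2/2$. The operator norm is then controlled through the Frobenius norm, $\|M\|_{\rm op}\le\|M\|_F\le J\max|M_{j\ell}|$, exactly as in Step (iv) of the proof of Theorem~\ref{thm:5.3}.

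For \eqref{eq:noise-alpha} I mirror Step (v) of Theorem~\ref{thm:5.3} with $\lambda=0$.
\begin{enumerate}
\item $B^\eta$ is symmetric. Weyl's inequality together with $\|B^\eta-B^Q\|_{\rm op}\le c_B^Q/2$ gives $\sigma_{\min}(B^\eta)\ge c_B^Q/2>0$. Since the diagonal entries of $B^\eta$ are nonnegative weighted second moments, $B^\eta$ is in fact PSD, so positive definiteness follows.
\item Write
\[
 \alpha^\eta-\alpha^Q=-(B^\eta)^{-1}\bigl[(A^\eta-A^Q)+(B^\eta-B^Q)\alpha^Q\bigr].
\]
This uses $A^Q=-B^Q\alpha^Q$.
\item Bound $\|(B^\eta)^{-1}\|_{\rm op}\le2/c_B^Q$.
\end{enumerate}

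The only delicate point is the Taylor lemma: justifying integrability of the first-order term and making sure the boundedness assumption on second derivatives is used uniformly in $t_k$. Everything else is bookkeeping with the weights $w_k$.
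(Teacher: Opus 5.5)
Your proposal follows the paper's proof essentially line by line. It uses the same second-order Taylor bound $|\mathbb{E}\varphi(x+\eta)-\varphi(x)|\le\tfrac12\Lambda\tau^2$ and the same entrywise bookkeeping with $\sum_k w_k=T$ plus the two boundary terms. The passages to $\sqrt{J}\max_j$ and to the Frobenius norm are also the same, as is the decomposition $\alpha^\eta-\alpha^Q=-(B^\eta)^{-1}[(A^\eta-A^Q)+(B^\eta-B^Q)\alpha^Q]$ taken from Step (v).

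One justification is wrong, though. Nonnegative diagonal entries do not make a symmetric matrix positive semidefinite; take $\bigl(\begin{smallmatrix}1&2\\2&1\end{smallmatrix}\bigr)$. Weyl's inequality for singular values only gives that $B^\eta$ is invertible, not that its eigenvalues are positive.

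Either of two repairs works:
\begin{itemize}
\item Apply Weyl's inequality to the eigenvalues of the symmetric matrices directly. Since $B^Q$ is positive definite, $\lambda_{\min}(B^Q)=c_B^Q$, so $\lambda_{\min}(B^\eta)\ge\lambda_{\min}(B^Q)-\|B^\eta-B^Q\|_{\rm op}\ge c_B^Q/2>0$. This gives positive definiteness at once.
\item Use the Gram structure $\alpha^{\mathsf T}B^\eta\alpha=\sum_k w_k\,\mathbb{E}\bigl|\sum_j\alpha_j\nabla\psi_j(t_k,Y_{t_k})\bigr|^2\ge0$, which holds because $w_k\ge0$. This is how the paper's Step (v) argues semidefiniteness.
\end{itemize}
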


\begin{proof}
For $\varphi$ with bounded second derivatives, a Taylor expansion and
$\mathbb{E}\eta^{(i)}_k=0$ give
$|\mathbb{E}\varphi(x+\eta^{(i)}_k)-\varphi(x)|
\le\tfrac12\sup|D^2\varphi|\,\tau^2$.
Applying this to $\zeta_j$, to $\psi_j(0,\cdot)$ and $\psi_j(T,\cdot)$, and
to $(\nabla\psi_j)^{\mathsf T}\nabla\psi_\ell$, and using
$\sum_kw_k=T$ for the time integrals, each entry of $A^\eta-A^Q$ is at
most $(T+2)\Lambda\tau^2/2$ and each entry of $B^\eta-B^Q$ is at most
$T\Lambda\tau^2/2$.
Then \eqref{eq:noise-entries} follows from
$|A^\eta-A^Q|\le\sqrt{J}\max_j|A^\eta_j-A^Q_j|$ and
$\|B^\eta-B^Q\|_{\rm op}\le\|B^\eta-B^Q\|_F$.
For \eqref{eq:noise-alpha}, write
$\alpha^\eta-\alpha^Q=-(B^\eta)^{-1}(A^\eta-A^Q)
 -(B^\eta)^{-1}(B^\eta-B^Q)\alpha^Q$
and repeat Step (v) of the proof of Theorem~\ref{thm:5.3} with
$\lambda=0$.
\end{proof}

The bias \eqref{eq:noise-entries} is of order $\tau^2$ and does not vanish as
$N\to\infty$: although the entries are linear functionals of the
marginal law, convolution with the noise changes the expectations of
the state-dependent nonlinear basis derivatives.
For the Gaussian basis of Example~\ref{ex:4.6} with bandwidth $h$ the
entries themselves are of order $h^{-2}$ and $\Lambda$ is of order $h^{-4}$,
so their entrywise relative perturbation is of order $(\tau/h)^2$.
The coefficient perturbation is additionally amplified by
$(c_B^Q)^{-1}$ in \eqref{eq:noise-alpha}; thus $\tau\ll h$ is reassuring
only when the discrete Gram matrix is also stable.  A noise scale
comparable with the bandwidth calls for a deconvolution or filtering
step that we do not develop here.

\subsection{Stationary case and the Fokker--Planck weak form}\label{sec:6}

When $U$ is time-independent, $U(t,x)=U(x)$ and the solution of
\eqref{eq:1.1} is non-explosive and irreducible with
$\int e^{2U/\sigma^2}dx<\infty$, the SDE admits the unique invariant
measure $\pi(dx)\propto e^{2U(x)/\sigma^2}\,dx$.
We assume the data is sampled in the stationary regime, that is, $\tilde\mu_t\equiv\pi$.

Choose $\psi_j(t,x)=\varphi_j(x)$, time-independent.
Then $\partial_t\psi_j\equiv 0$, $\psi_j(T,\cdot)-\psi_j(0,\cdot)\equiv 0$, and
\begin{align}
\label{eq:6.1}
 A_j &= -\frac{\sigma^2 T}{2} \langle\Delta\varphi_j,\pi\rangle, \\
\label{eq:6.2}
 B_{jk} &= T\langle(\nabla\varphi_j)^{\mathsf T}\nabla\varphi_k,\pi\rangle.
\end{align}
The optimality condition $B\alpha^*=-A$ becomes
\begin{equation}
\label{eq:6.3}
 \sum_k \alpha^*_k \langle(\nabla\varphi_j)^{\mathsf T}\nabla\varphi_k,\pi\rangle
 = \frac{\sigma^2}{2}\langle\Delta\varphi_j,\pi\rangle,\quad j=1,\ldots,J,
\end{equation}
with $T$ canceling out.
Because $T$ cancels, \eqref{eq:6.3} is the same for every horizon.
When the diffusion is positive recurrent with invariant law $\pi$, which
we assume whenever a single long trajectory is used, the time averages
over $[0,T]$ approach the $\pi$-averages in \eqref{eq:6.3} as
$T\to\infty$ by the ergodic theorem.
For observations at irregular deterministic or exogenous times, an
unweighted average over the recorded positions requires in addition
that the observation schedule be non-informative and that the sampled
sequence satisfy the corresponding ergodic law of large numbers.
This regime is outside Theorem~\ref{thm:5.3}, which covers independent
trajectories.

\begin{prop}[Stationary Fokker--Planck Galerkin form]
\label{prop:6.1}
For every $v\in\mathrm{span}\{\varphi_1,\ldots,\varphi_J\}$ and
$j=1,\ldots,J$, assume that, with $n_R$ the outer unit normal to
$\partial B_R$,
\[
\lim_{R\to\infty}\int_{\partial B_R}\varphi_j\pi\nabla v\cdot n_R\,dS
=\lim_{R\to\infty}\int_{\partial B_R}\varphi_j\nabla\pi\cdot n_R\,dS
=\lim_{R\to\infty}\int_{\partial B_R}\pi\nabla\varphi_j\cdot n_R\,dS=0.
\]
Then the system
\eqref{eq:6.3} is the Galerkin discretization of the elliptic equation
\begin{equation}
\label{eq:6.4}
 \nabla\cdot(\pi\,\nabla v) = -\frac{\sigma^2}{2}\Delta\pi,
\end{equation}
weak form against test functions $\varphi_j$.
\end{prop}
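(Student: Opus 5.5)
The plan is to verify directly that, for $v=\sum_k\alpha_k\varphi_k$, the weak form of \eqref{eq:6.4} tested against $\varphi_j$ is exactly the $j$-th equation of \eqref{eq:6.3}. The weak form should come from multiplying \eqref{eq:6.4} by $\varphi_j$, integrating over a ball $B_R$, integrating by parts, and letting $R\to\infty$. The three boundary-limit hypotheses are there precisely so that each boundary term produced this way vanishes.

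Left-hand side: the divergence theorem on $B_R$ gives
\[
\int_{B_R}\varphi_j\,\nabla\cdot(\pi\nabla v)\,dx=\int_{\partial B_R}\varphi_j\pi\nabla v\cdot n_R\,dS-\int_{B_R}\pi\,\nabla\varphi_j\cdot\nabla v\,dx .
\]
The first hypothesis kills the boundary term as $R\to\infty$. The remaining integral tends to $\langle(\nabla\varphi_j)^{\mathsf T}\nabla v,\pi\rangle=\sum_k\alpha_k\langle(\nabla\varphi_j)^{\mathsf T}\nabla\varphi_k,\pi\rangle$.

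Right-hand side: integrate by parts twice (Green's second identity) to move both derivatives from $\pi$ onto $\varphi_j$:
\[
\int_{B_R}\varphi_j\Delta\pi\,dx=\int_{\partial B_R}\bigl(\varphi_j\nabla\pi-\pi\nabla\varphi_j\bigr)\cdot n_R\,dS+\int_{B_R}\pi\,\Delta\varphi_j\,dx .
\]
The second and third hypotheses remove both boundary terms in the limit, leaving $\langle\Delta\varphi_j,\pi\rangle$. Hence the tested equation reads
\[
-\sum_k\alpha_k\langle(\nabla\varphi_j)^{\mathsf T}\nabla\varphi_k,\pi\rangle=-\tfrac{\sigma^2}{2}\langle\Delta\varphi_j,\pi\rangle ,
\]
which is \eqref{eq:6.3}. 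Both the trial and the test space are $\mathrm{span}\{\varphi_j\}$, so this is a Galerkin discretization in the standard sense.

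The main obstacle is justifying the passage to the limit: the volume integrals over $B_R$ must converge to the full $\pi$-integrals. I would handle this with dominated or monotone convergence, using integrability of $\pi|\nabla\varphi_j||\nabla\varphi_k|$ and $\pi|\Delta\varphi_j|$. This integrability should follow from (P1)-type growth together with the finite second moment of $\pi$ guaranteed by (H1), and it is also implicit in the finiteness of the entries \eqref{eq:6.1}--\eqref{eq:6.2}. One also needs $\pi$ to be $C^2$, which holds when $U\in C^2$ since $\pi\propto e^{2U/\sigma^2}$. As a consistency check, $\nabla\pi=(2/\sigma^2)\pi\nabla U$ turns \eqref{eq:6.4} into $\nabla\cdot(\pi(\nabla v+\nabla U))=0$, which $v=-U$ solves; this agrees with Proposition~\ref{prop:5.1}.
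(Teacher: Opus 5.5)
Your proposal is correct and follows the paper's proof. Multiply \eqref{eq:6.4} by $\varphi_j$, integrate by parts (once on the left-hand side, twice on the right-hand side), discard the three boundary terms using the hypotheses, and substitute $v=\sum_k\alpha_k\varphi_k$ to obtain \eqref{eq:6.3}; your truncation to $B_R$ and the dominated-convergence step via (P1) and the finite second moment of $\pi$ make explicit what the paper's short argument leaves implicit.
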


\begin{proof}
Multiply $-\nabla\cdot(\pi\,\nabla v)=\frac{\sigma^2}{2}\Delta\pi$ by
$\varphi_j$ and apply the integration by parts twice.
The boundary terms vanish by assumption, and then
$\int(\nabla\varphi_j)^{\mathsf T}\nabla v\,\pi\, dx = \frac{\sigma^2}{2}\int\Delta\varphi_j\,\pi\, dx$.
Substituting $v=\sum_k\alpha_k\varphi_k$, we recover \eqref{eq:6.3}.
The same identity follows from Proposition~\ref{prop:5.1} with
$\tilde\mu_t\equiv\pi$ by It\^o's formula; the boundary assumption is
needed only to identify it with the weak form of \eqref{eq:6.4}.
\end{proof}

Since $\pi\propto e^{2U/\sigma^2}$ gives $\nabla\pi=(2/\sigma^2)\pi\nabla U$,
equation \eqref{eq:6.4} is solved exactly by $v=-U$, and under the
recovery map $\hat U=-v$ this returns the true potential, $\hat U=U$.
Rewritten in $U$, equation \eqref{eq:6.4} is the classical inverse
Fokker--Planck relation
$\nabla\cdot(\pi\,\nabla U) = \frac{\sigma^2}{2}\Delta\pi$
that identifies the drift potential from a known stationary density.

In the stationary case $U=(\sigma^2/2)\log\pi$ up to an additive
constant, so under the closure assumption of
Corollary~\ref{cor:complete} the population estimate converges in the
energy seminorm to $(\sigma^2/2)\log\pi$.
This is the target of the plug-in log-density estimator
$\hat U^{\rm cl}=(\sigma^2/2)\log\hat\pi$ \cite{bri-etal:2001}, and
Theorem~\ref{thm:4.3} extends the estimator to the non-stationary case.

\begin{rem}[Relation to score matching]
\label{rem:score-matching}
Set $\theta=-(2/\sigma^2)\alpha$ and
$p_\theta\propto\exp(\sum_j\theta_j\varphi_j)$.
Since $\nabla\log\pi=(2/\sigma^2)\nabla U$, we have
\[
 \nabla\log p_\theta-\nabla\log\pi
 =-(2/\sigma^2)(\nabla v_\alpha+\nabla U),
\]
so \eqref{eq:ls-gap} becomes
\[
 V(f_\alpha,g_\alpha)
 =c+\tfrac{\sigma^4T}{4}\mathcal{J}_{\rm SM}(\theta),\qquad
 \mathcal{J}_{\rm SM}(\theta)
 =\mathbb{E}_\pi\bigl[\tfrac12|\nabla\log p_\theta|^2
 +\Delta\log p_\theta\bigr].
\]
Under the boundary condition
$\pi(x)\nabla\log p_\theta(x)\to0$, \eqref{eq:6.3} is therefore both
the closed-form score-matching estimator
\cite{hyvarinen:2005,hyvarinen:2007} and the generator moment condition
$\mathbb{E}_\pi[\mathcal{L}\varphi_j]=0$
\cite{hansen-scheinkman:1995}. With a quadratic penalty it
coincides with the stationary estimator of \cite{batz-etal:2016}.
Theorem~\ref{thm:4.3} gives the finite-horizon extension, where
$\nabla\log\tilde\mu_t$ generally differs from
$(2/\sigma^2)\nabla U(t,\cdot)$ and marginal score matching does not
recover $U$.
\end{rem}

\begin{rem}[Cost-side dual and the Schr\"odinger connection]
\label{rem:cost-side}
A complementary parametrization expands the running cost,
$\Phi_\beta=\sum_{j=1}^m\beta_j\phi_j$, and yields a
Schr\"odinger-type dual \cite{leo:2014,mik:2021a}.
The Hopf--Cole transform $\varphi=\exp(-v/\sigma^2)$ linearizes the HJB
equation, and Feynman--Kac represents $\mathcal{J}^*(f_\beta,g_\beta)$
through a path expectation that is exponential-linear in $\beta$.
Thus, under integrability, $V(f_\beta,g_\beta)$ is a convex
log-partition functional whose first-order condition matches observed
occupation statistics to those of the tilted controlled law. Unlike
the value-side system \eqref{eq:4.8}, each evaluation requires solving
a linear parabolic equation, so the minimization has no closed form.
The gap is invariant under adding a constant to $\Phi_\beta$.
\end{rem}

\section{Experiments}\label{sec:9}

\subsection{One-dimensional double-well benchmark}\label{sec:9.1}

We validate the predicted $\Delta t$-decoupling of Algorithm~\ref{alg:7-1}
on a 1D stationary potential
$U(x) = -\tfrac{1}{4}(x^2-1)^2$ with $\sigma=0.4$,
corresponding under \eqref{eq:1.1} to
$dX_t=(X_t-X_t^3)\,dt+\sigma\,dW_t$ and
$\pi(x)\propto e^{2U/\sigma^2}$, with modes at $x=\pm1$.
We simulate $N$ independent paths via Euler--Maruyama at fine step
$\Delta t_{\rm sim}=0.005$ on horizon $T=50$, then subsample at observation
intervals $\Delta t_{\rm obs}\in\{0.01, 0.1, 1.0\}$ to mimic the
coarse-$\Delta t$ regime typical of real-data SDE inference.
A Gaussian RBF basis with $J=21$ uniformly-spaced centers on $[-2.5,2.5]$ and
bandwidth $h=0.35$ serves as $\Psi$ in its stationary specialization
$\psi_j(t,x)=\varphi_j(x)$.

On the same data we compare Algorithm~\ref{alg:7-1} with the classical
log-density estimator
$\hat U^{\rm cl}=(\sigma^2/2)\log\hat\pi$ \cite{bri-etal:2001},
Euler--Maruyama drift maximum likelihood on the same basis, and its
Ozaki local-linearization correction \cite{ozaki:1992}.  The latter
freezes the drift derivative at the Euler--Maruyama fit.  For KDE we
report Silverman's bandwidth and the oracle bandwidth minimizing the
reconstruction error.
In this stationary benchmark, score matching on the same basis gives
exactly the linear system of Algorithm~\ref{alg:7-1}
(Remark~\ref{rem:score-matching}), so we do not list it separately.
The reconstruction error of each estimator $\hat U$ is reported as the
$L^2(\pi)$-norm of $\hat U - U_{\rm true}$ after subtracting the
$\pi$-weighted mean (which factors out the unrecoverable additive constant).

\begin{table}[htbp]
\centering
\caption{$L^2(\pi)$ recovery error on the 1D double-well across $N$ and
$\Delta t_{\rm obs}$. KDE uses Silverman's (Silv.) or oracle (Orc.)
bandwidth; EM-MLE and Ozaki-LL are the two transition-based fits.
Entries are means over five replications after a $20\%$ burn-in;
standard deviations are provided with the accompanying data.}
\label{tab:phase0b}
\begin{tabular}{rrrrrrr}
\toprule
$N$ & $\Delta t_{\rm obs}$ & Algorithm~\ref{alg:7-1} & KDE (Silv.) & KDE (Orc.) & EM-MLE & Ozaki-LL \\
\midrule
   100 & 0.01 & 0.0081 & 0.0097 & 0.0081 & 0.0089 & 0.0089 \\
  1000 & 0.01 & 0.0025 & 0.0037 & 0.0026 & 0.0042 & 0.0042 \\
 10000 & 0.01 & 0.0007 & 0.0014 & 0.0008 & 0.0013 & 0.0014 \\[2pt]
   100 & 0.10 & 0.0080 & 0.0135 & 0.0082 & 0.0136 & 0.0135 \\
  1000 & 0.10 & 0.0025 & 0.0061 & 0.0028 & 0.0070 & 0.0060 \\
 10000 & 0.10 & 0.0007 & 0.0027 & 0.0010 & 0.0058 & 0.0045 \\[2pt]
   100 & 1.00 & 0.0083 & 0.0210 & 0.0093 & 0.0335 & 0.0313 \\
  1000 & 1.00 & 0.0029 & 0.0113 & 0.0037 & 0.0326 & 0.0271 \\
 10000 & 1.00 & 0.0008 & 0.0054 & 0.0014 & 0.0322 & 0.0265 \\
\bottomrule
\end{tabular}
\end{table}

\begin{figure}[htbp]
\centering
\includegraphics[width=0.86\textwidth]{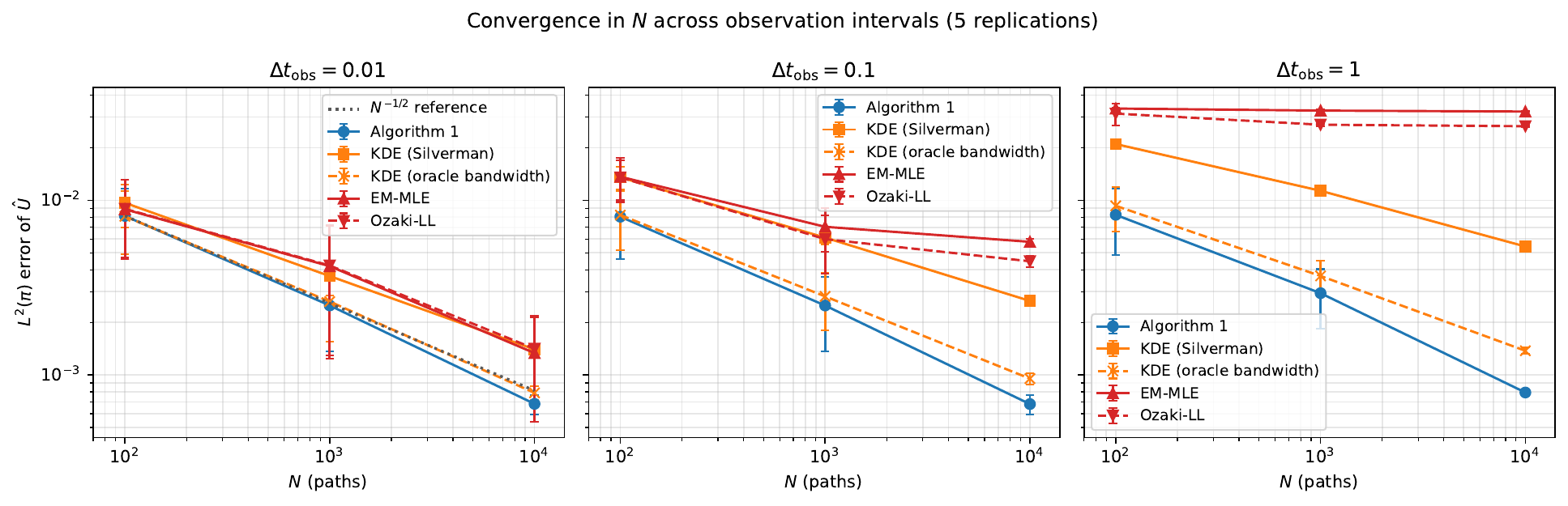}
\caption{$L^2(\pi)$ error versus $N$ at
$\Delta t_{\rm obs}\in\{0.01,0.1,1\}$. Algorithm~\ref{alg:7-1}
(blue) follows the dotted $N^{-1/2}$ reference, whereas EM-MLE (solid
red) and Ozaki-LL (dashed red) reach interval-dependent bias floors.
KDE uses Silverman's (solid orange) or oracle (dashed orange) bandwidth.
Markers and bars are means and standard deviations over five replications.}
\label{fig:phase0b-grid}
\end{figure}

Table~\ref{tab:phase0b} and Figure~\ref{fig:phase0b-grid} show that the
proposed error follows the predicted $N^{-1/2}$ rate and remains
$0.0007$--$0.0008$ at $N=10^4$ across the three intervals.  At
$\Delta t_{\rm obs}=1$, EM-MLE instead reaches a $0.0322$ bias floor,
which the Ozaki correction lowers but does not remove.

\paragraph{Isolating the observation interval.}
To separate sampling interval from observation count, we fix the total
budget at $n\approx5\times10^4$ and adjust $N$.  In
Figure~\ref{fig:phase0b-dt}, the proposed and log-density estimators do
not deteriorate as $\Delta t_{\rm obs}$ grows from $0.01$ to $5$.
The two transition-based fits instead end near $0.054$, about 30 times
the proposed estimator's error; their small replication variance at the
coarse end confirms a discretization floor.

\begin{figure}[htbp]
\centering
\includegraphics[width=0.68\textwidth]{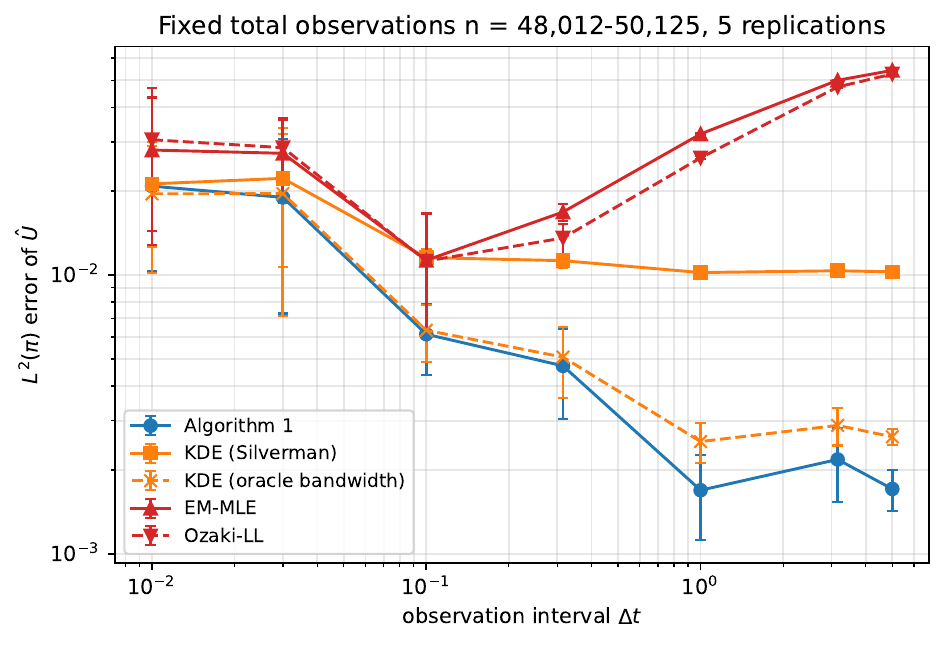}
\caption{$L^2(\pi)$ error against the observation interval at a fixed
budget $n\approx5\times10^4$, with $N$ adjusted from $12$ to $5556$.
The proposed and log-density estimators do not deteriorate, whereas the two
transition-based fits deteriorate. Markers and bars summarize five
replications.}
\label{fig:phase0b-dt}
\end{figure}

\paragraph{Basis dimension and conditioning.}
Table~\ref{tab:J-sweep} displays the regularization mechanism of
\S\ref{sec:5}: after $J=13$ the approximation error is negligible,
while regularization keeps the $L^2(\pi)$ error near
$1.5\times10^{-3}$ even as $\mathrm{cond}(B)$ reaches
$1.8\times10^{19}$.

\begin{table}[htbp]
\centering
\caption{Basis sweep on the 1D double-well ($N=1000$,
$\Delta t_{\rm obs}=0.1$, $h=0.35$, $\lambda=10^{-8}$; one realization).
The energy columns give the estimated and best-approximation errors.
For $J\gtrsim21$, $B$ is numerically singular before regularization.}
\label{tab:J-sweep}
\begin{tabular}{rrrrrr}
\toprule
$J$ & $L^2(\pi)$ error & energy error & best approximation & $\mathrm{cond}(B)$ & $\mathrm{cond}(B+\lambda I)$ \\
\midrule
 5 & 0.0178 & 0.162  & 0.162      & $1.6\times10^{4}$  & $1.6\times10^{4}$ \\
 9 & 0.0043 & 0.027  & 0.027      & $1.6\times10^{5}$  & $1.6\times10^{5}$ \\
13 & 0.0016 & 0.0052 & 0.0001     & $1.9\times10^{10}$ & $2.0\times10^{8}$ \\
21 & 0.0015 & 0.0075 & $<10^{-4}$ & $5.0\times10^{13}$ & $3.4\times10^{8}$ \\
41 & 0.0015 & 0.0076 & $<10^{-4}$ & $1.2\times10^{18}$ & $6.8\times10^{8}$ \\
81 & 0.0015 & 0.0077 & $<10^{-4}$ & $1.8\times10^{19}$ & $1.4\times10^{9}$ \\
\bottomrule
\end{tabular}
\end{table}

To exercise the full finite-horizon estimator, we use the time-dependent
potential
$U(t,x)=\bigl(1+0.4\sin(\pi t/T)\bigr)\bigl(-\tfrac14(x^2-1)^2\bigr)$,
and simulate $N=4000$ trajectories on $[0,50]$ without burn-in.  With
$\lambda=10^{-2}$, we use the tensor basis
$\psi_{mn}(t,x)=P_m(2t/T-1)\,\varphi_n(x)$ with Legendre temporal modes
$m=0,1,2$ and $21$ spatial RBFs ($J=63$), assembling $A$ and $B$ by
trapezoidal time quadrature.  Figure~\ref{fig:phase0b-timedep} shows
time-averaged relative $L^2(\pi_t)$ errors $0.036$, $0.036$, and $0.043$
at $\Delta t_{\rm obs}=0.01,0.1,1$, respectively.  This near-flat error
supports Theorem~\ref{thm:5.3} beyond stationarity.

As a self-consistency check, center each recovered time slice and define
$\rho_{\rm time}=\Big(\tfrac{1}{T}\int_0^T\|\hat U_c(t,\cdot)-\bar U_c\|_{L^2(\pi)}^2\,dt
\big/\|\bar U_c\|_{L^2(\pi)}^2\Big)^{1/2}$.  The estimate is $0.020$ on
stationary data and $0.097$ on time-dependent data, against the true
values $0$ and $0.100$.

\begin{figure}[htbp]
\centering
\includegraphics[width=0.86\textwidth]{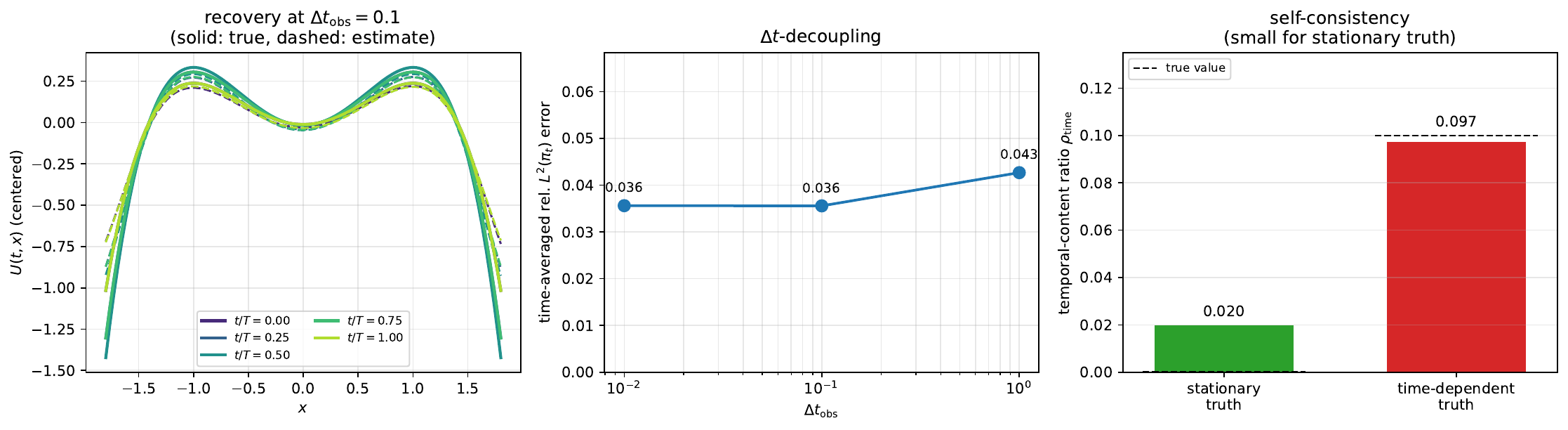}
\caption{Time-dependent 1D validation of the finite-horizon estimator.
Left: true (solid) and recovered (dashed) potentials at five times.
Middle: time-averaged relative $L^2(\pi_t)$ error versus
$\Delta t_{\rm obs}$. Right: recovered temporal-content ratio
$\rho_{\rm time}$, with true values dashed.}
\label{fig:phase0b-timedep}
\end{figure}

\subsection{Two-dimensional benchmark}\label{sec:9.2}

We extend the 1D experiment to $d=2$ on a separable double-well
\begin{equation*}
 U(x,y) = -\tfrac{1}{4}(x^2-1)^2 - \tfrac{1}{4}(y^2-1)^2,
\end{equation*}
whose stationary distribution has four modes at $(\pm 1,\pm 1)$.
The same conventions apply ($\sigma=0.4$, $T=30$,
$\Delta t_{\rm sim}=0.005$, $\Delta t_{\rm obs}=0.05$, $N=2000$ paths), with
a tensor-product Gaussian RBF basis comprising $J=13\times 13=169$ centers
on $[-2.5,2.5]^2$ at bandwidth $h=0.4$.
Figure~\ref{fig:phase1-2d} shows one representative recovery.

\begin{figure}[htbp]
\centering
\includegraphics[width=0.86\textwidth]{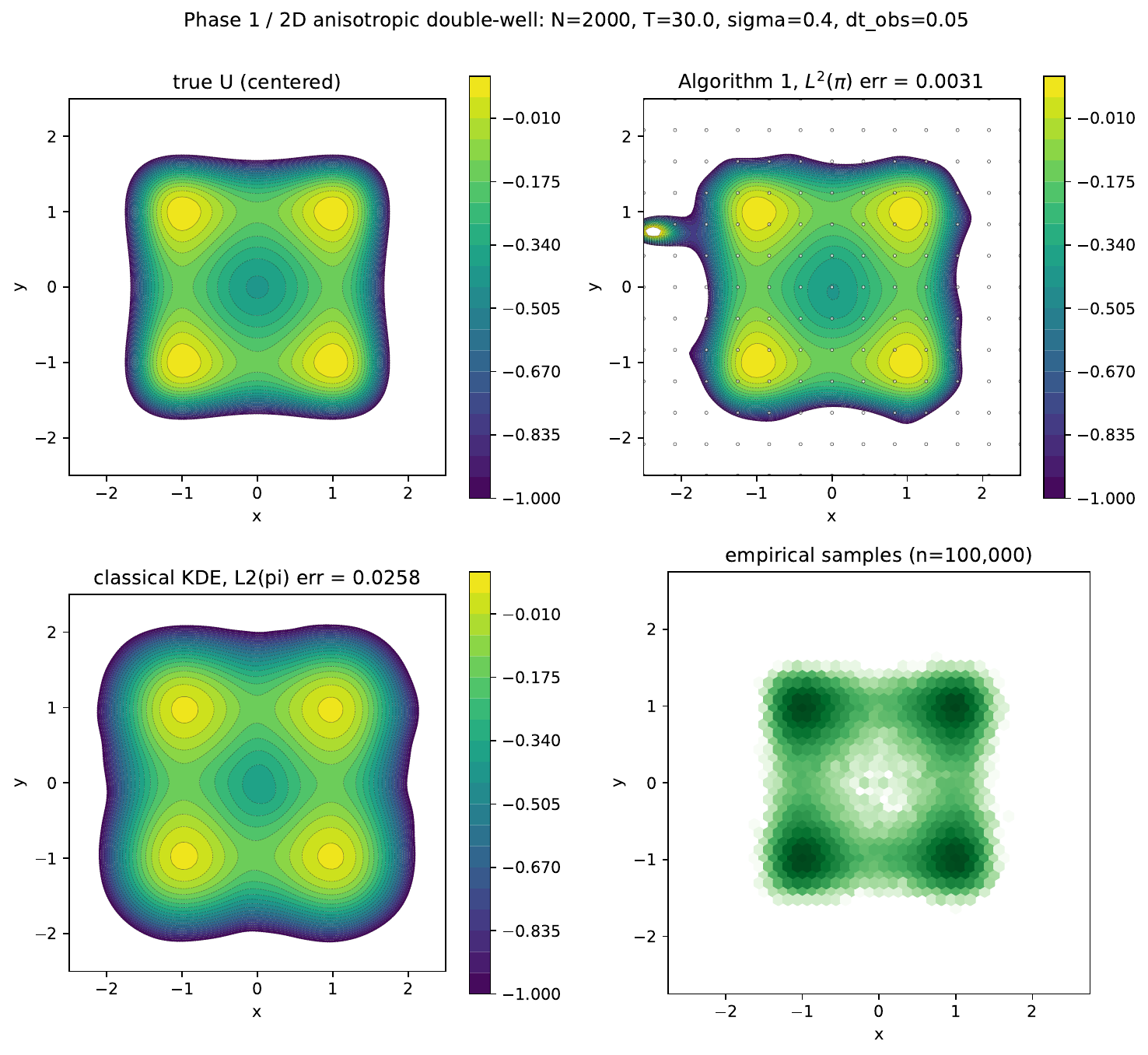}
\caption{Two-dimensional recovery on the separable double-well
($N=2000$, $T=30$, $\sigma=0.4$, $\Delta t_{\rm obs}=0.05$):
true $U(x,y)$ (top left), Algorithm~\ref{alg:7-1} estimate (top right;
white points are RBF centers), untuned log-density estimate (bottom
left), and empirical density (bottom right). The proposed estimate has
$L^2(\pi)$ error $0.0031$.}
\label{fig:phase1-2d}
\end{figure}

\paragraph{Does the gradient projection recover $U$ under rotational
drift?}
Algorithm~\ref{alg:7-1} recovers the gradient projection of the drift.
If $b=\nabla U+b_\perp$ and $\nabla\!\cdot(\pi b_\perp)=0$, then
$\int b_\perp^{\mathsf T}\nabla\varphi\,\pi\,dx
 =-\!\int\!\varphi\,\nabla\!\cdot\!(\pi b_\perp)\,dx=0$,
so the rotational component does not alter the projection.  This holds
for $b_\perp=\omega\mathcal R\nabla U$, where
$\mathcal R=\bigl(\begin{smallmatrix}0&1\\-1&0\end{smallmatrix}\bigr)$,
and preserves $\pi\propto e^{2U/\sigma^2}$.  We test the predicted
insensitivity to rotation at $\omega\in\{0,0.5,1,2,4\}$ with
$N=1000$, $T=20$, and $\Delta t_{\rm obs}=0.05$.

\begin{figure}[htbp]
\centering
\includegraphics[width=0.9\textwidth]{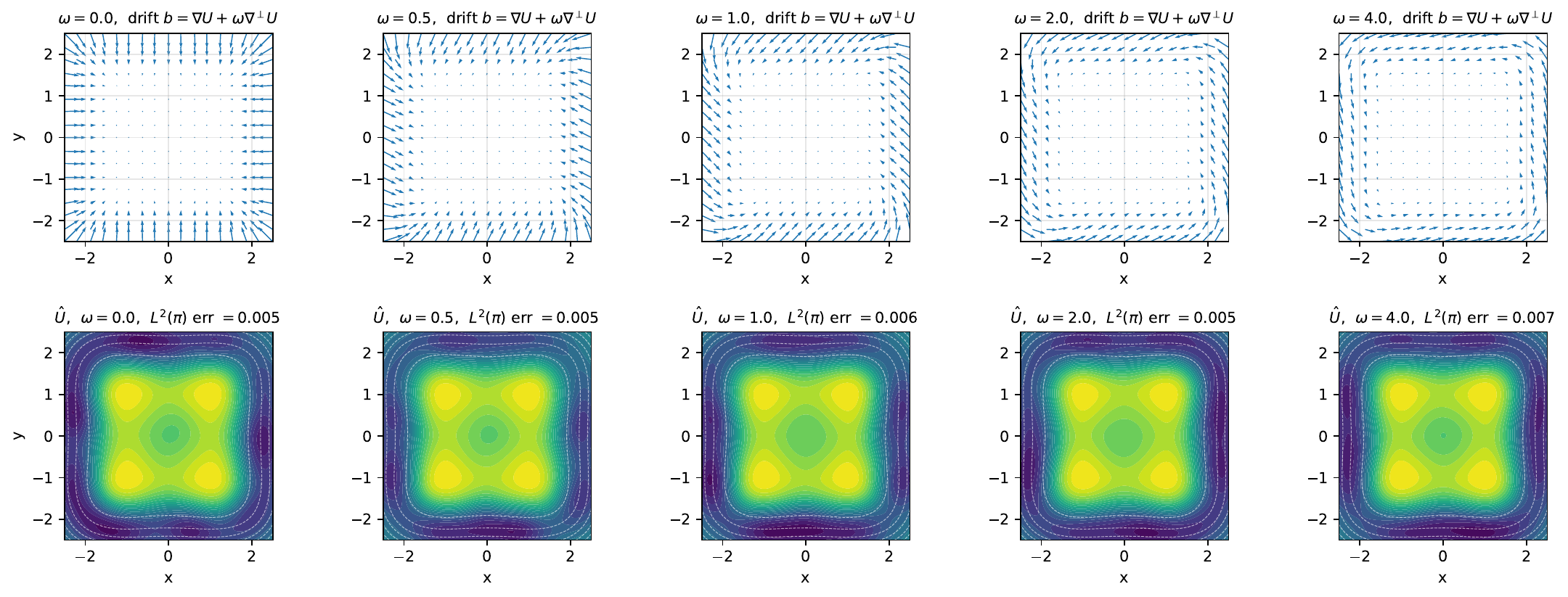}
\caption{Algorithm~\ref{alg:7-1} on the rotational-truth synthetic
$dX=(\nabla U + \omega\,\mathcal{R}\nabla U)\,dt + \sigma\,dW$.
Top: drift fields for increasing $\omega$. Bottom: recovered $\hat U$
(color) with true contours (white dashed); the surface remains stable
across the sweep.}
\label{fig:phase1-2d-rotational}
\end{figure}

\begin{figure}[htbp]
\centering
\includegraphics[width=0.6\textwidth]{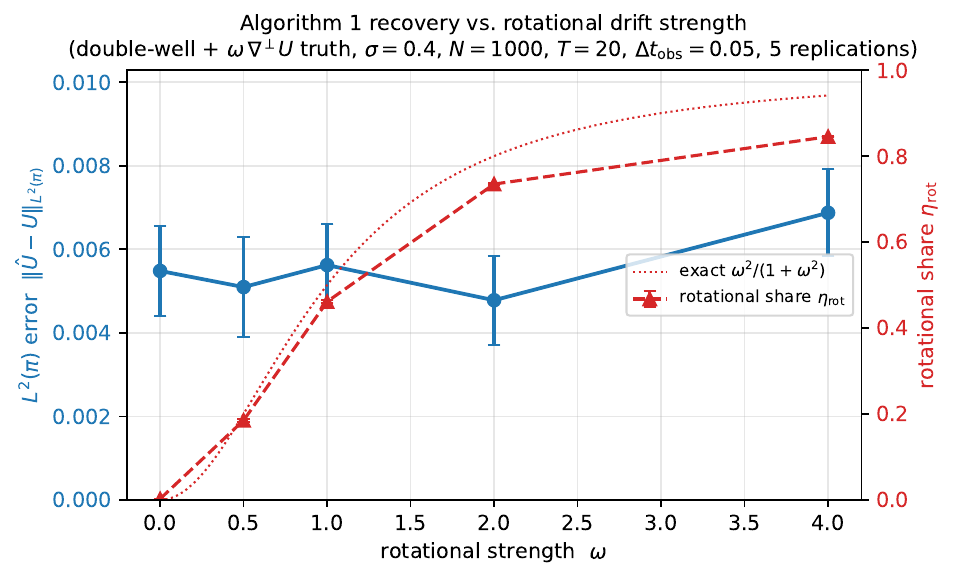}
\caption{Quantitative summary of Figure~\ref{fig:phase1-2d-rotational}.
$L^2(\pi)$ recovery error (blue) and rotational share $\eta_{\rm rot}$
(red) versus $\omega$; the dotted curve is $\omega^2/(1+\omega^2)$.
Markers and bars summarize five replications.}
\label{fig:phase1-2d-rotational-summary}
\end{figure}

Figures~\ref{fig:phase1-2d-rotational}--\ref{fig:phase1-2d-rotational-summary}
show that as $\eta_{\rm rot}$ grows from $0.00$ to $0.85$, the recovery
error stays between $0.0048$ and $0.0069$ without a monotone trend.
Thus a large rotational share need not distort the recovered gradient
component when the rotation is $\pi$-preserving.

\subsection{Illustrative application to animal telemetry with irregular sampling}\label{sec:9.3}

We illustrate the estimator on GPS telemetry from the African buffalo
\emph{Pepper} in Kruger National Park
\cite{getz-etal:2007,cross-etal:2016}, redistributed via the
\texttt{ctmm} R package \cite{ctmm}.
The record has $n=1725$ fixes over $250$ days, with median interval
$2.0$\,h and range $3$\,min--$5.1$ days.  It therefore tests the
end-to-end irregular-sampling workflow; its ecological scope is
discussed in \S\ref{sec:10.1}.

The $61\times17$\,km range is rescaled coordinatewise to $[-1,1]^2$.
We use a $13\times13$ Gaussian RBF basis with $h=0.18$ and
$\lambda=10^{-4}$; the bandwidth corresponds to approximately
$5.7$\,km by $1.6$\,km.  For a specified position-error scale $\tau$,
Proposition~\ref{prop:noise} gives an entrywise relative perturbation
of order $(\tau/h)^2$, with coefficient error also depending on the
discrete Gram-matrix stability.  The coordinatewise
rescaling avoids data-starved basis centers but induces anisotropic
noise.  With
$R=\operatorname{diag}(1/L_x,1/L_y)$,
$\Sigma=\sigma R=\operatorname{diag}(\sigma_x,\sigma_y)$, and
$\Gamma=\operatorname{diag}(\sigma_x^2,\sigma_y^2)/\bar\sigma^2$,
$\bar\sigma^2=(\sigma_x^2+\sigma_y^2)/2$, the transformed drift is
$R^2\nabla\tilde U$.  Expanding
$(\bar\sigma^2/\sigma^2)\tilde U\approx-\sum_k\alpha_k\psi_k$ gives
\begin{equation}
\label{eq:9.3-aniso-system}
   B^\Gamma\alpha
   = \tfrac{1}{2}\left[\sigma_x^2\,\hat\pi[\partial_{\tilde x\tilde x}\psi_j]
                    + \sigma_y^2\,\hat\pi[\partial_{\tilde y\tilde y}\psi_j]\right]_{j=1}^J,
   \qquad
   B^\Gamma_{jk}=\hat\pi[(\nabla\psi_j)^{\mathsf T}\Gamma\,\nabla\psi_k].
\end{equation}
The positive factor affects only the scale of the recovered surface.
Equation~\eqref{eq:9.3-aniso-system} is the case $\Sigma=\sigma R$ and
$Q=R^{-2}$ of Remark~\ref{rem:aniso}; we apply its stationary form to
the long trajectory under a quasi-stationary ergodic approximation,
assuming that the observation schedule is non-informative for location.
\paragraph{Estimating $\sigma^2$.}
We estimate the diffusion coefficient by the standard 2D
quadratic-variation formula, averaging the squared displacement over
the set $\mathcal{K}=\{k:\Delta t_k=\Delta t\}$ of increments at the
regular gap $\Delta t=3600$\,s,
\begin{equation}
\label{eq:9.3-sigma}
 \hat\sigma^2 = \frac{1}{2\Delta t\,|\mathcal{K}|}
   \sum_{k\in\mathcal{K}}\bigl(|\Delta X_k|^2+|\Delta Y_k|^2\bigr).
\end{equation}
This gives $\hat\sigma^2\approx 51\,\text{m}^2/\text{s}$ on the
$|\mathcal{K}|=400$ exactly-1-h Pepper increments.
The increment second moment contains an $O(\Delta t^2)$ correction from
the drift and local variation, so after division in \eqref{eq:9.3-sigma}
the diffusion estimate has an $O(\Delta t)$ bias whose sign is generally
model-dependent.  Independent position noise adds an upward term of
order $\tau^2/\Delta t$.  We use $\hat\sigma^2$ only to set the potential
scale: by \eqref{eq:6.3}, changing it by a common positive factor rescales
$\hat U$ without changing the recovered shape.

\paragraph{How each estimator handles irregular sampling.}
The stationary form of Algorithm~\ref{alg:7-1} (\S\ref{sec:6}) depends
on spatial empirical averages and uses all $1725$ positions.
Euler--Maruyama maximum likelihood instead uses
$\sum_k |X_{k+1}-X_k - b(X_k)\Delta t_k|^2/(2\sigma^2\Delta t_k)$.
Although the weighting accommodates irregular intervals, the
Euler transition approximation requires short steps.  We therefore
retain the $1539/1724$ increments with $\Delta t_k\le6$\,h.
Figures~\ref{fig:phase1-real-pepper-data} and
\ref{fig:phase1-real-pepper} show the sampling pattern and the recovered
surfaces.

\begin{figure}[htbp]
\centering
\includegraphics[width=0.92\textwidth]{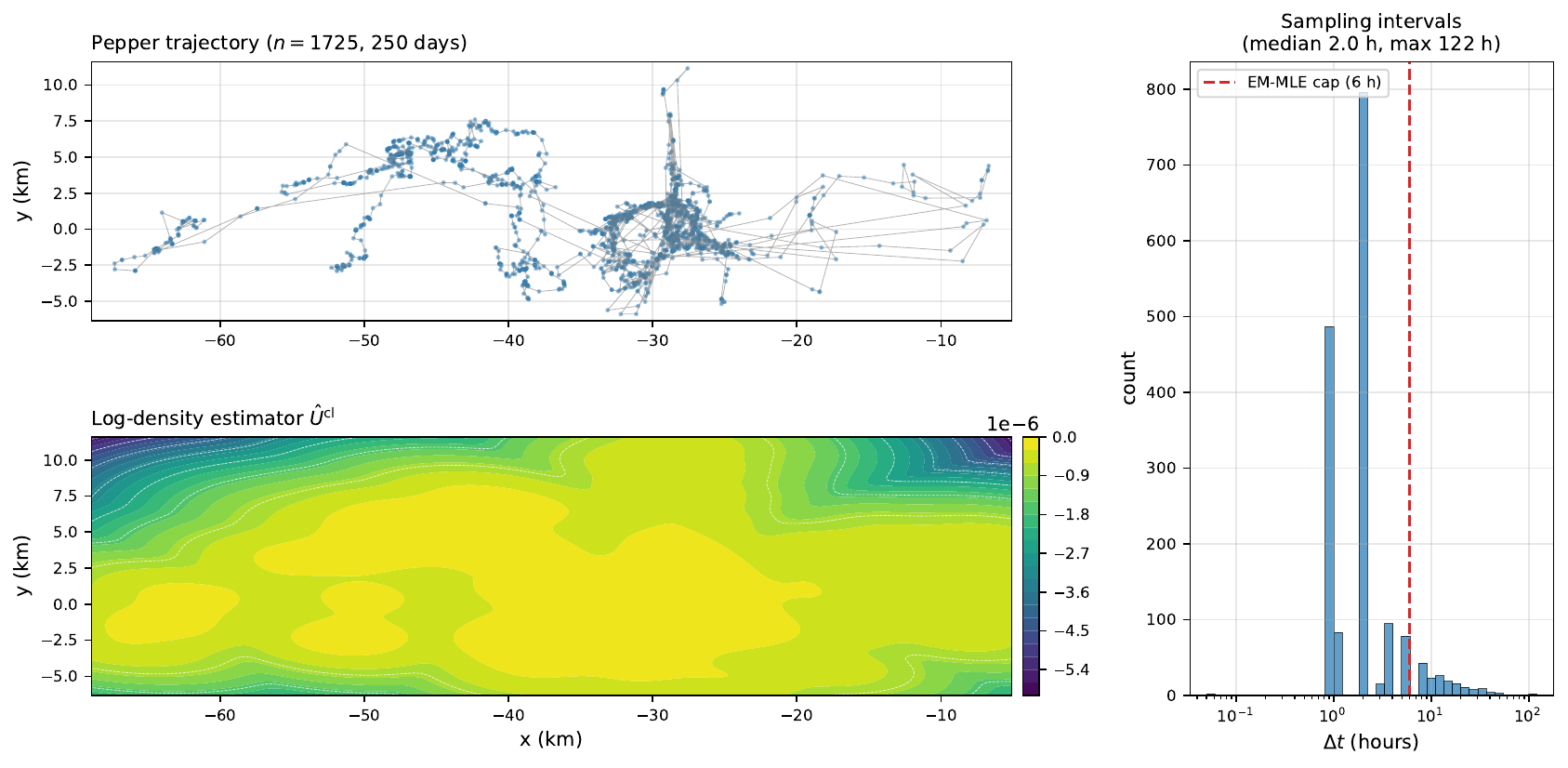}
\caption{Data context for the African buffalo \emph{Pepper} (Kruger
National Park, $n=1725$ fixes, $250$ days, irregular sampling).
Left: trajectory and log-density reference. Right: sampling-interval
histogram; the dashed line marks the $6$\,h EM-MLE cap.}
\label{fig:phase1-real-pepper-data}
\end{figure}

\begin{figure}[htbp]
\centering
\includegraphics[width=0.64\textwidth]{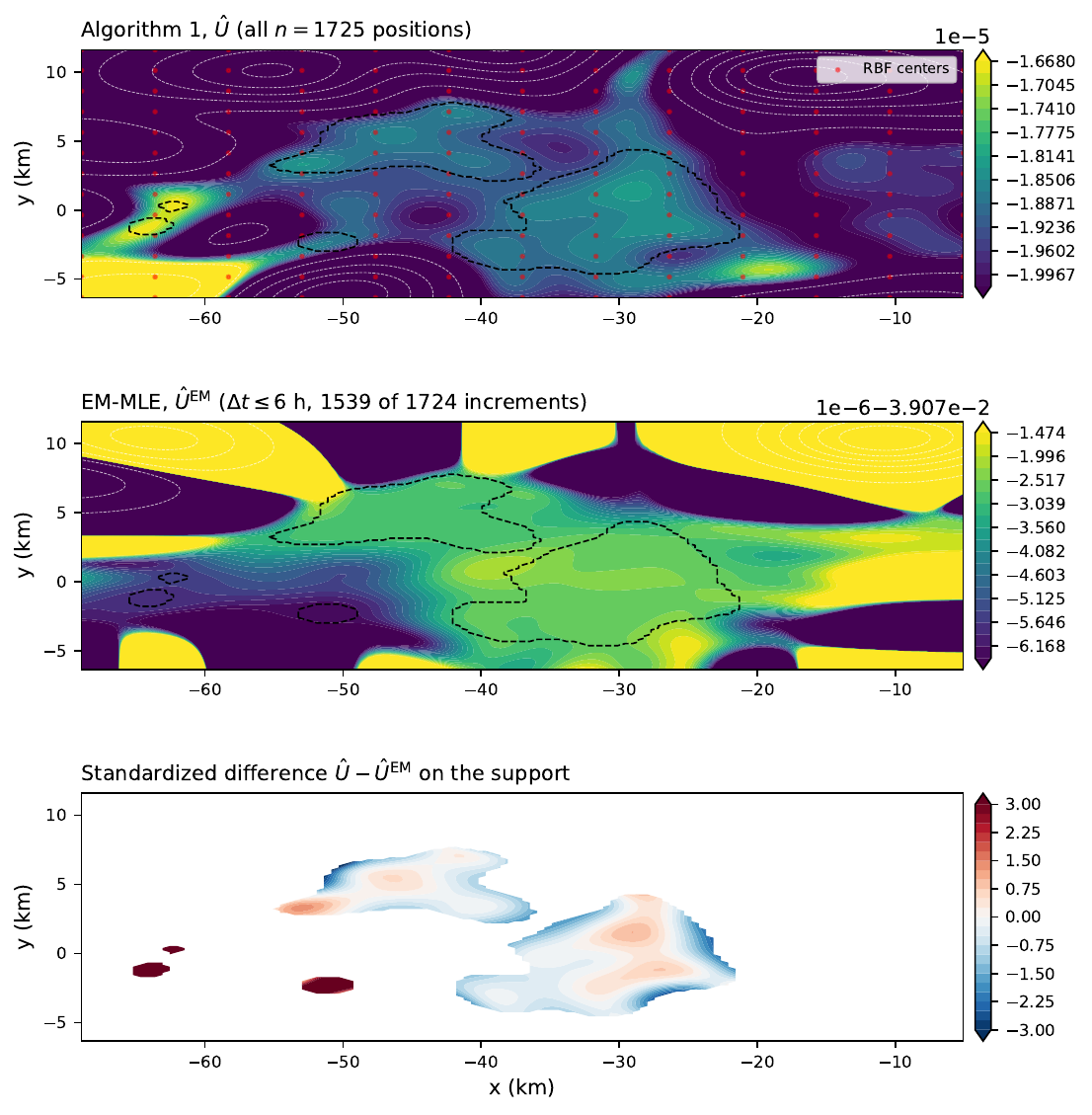}
\caption{Recovered potential surfaces on \emph{Pepper}
(same data as Figure~\ref{fig:phase1-real-pepper-data}).
Top: Algorithm~\ref{alg:7-1} using all positions. Middle: EM-MLE using
the $1539$ short-step increments. Bottom: standardized difference on
$\{\hat\pi\ge10\%\hat\pi_{\max}\}$, outlined above and clipped at
$\pm3$.}
\label{fig:phase1-real-pepper}
\end{figure}

On Pepper the EM-MLE surface is nearly constant on the occupancy
support, placing most variation outside the data region; its correlations
with $\hat U$ are therefore weak ($r=-0.27$, rank $0.21$).  The regularly
sampled buffalo \emph{Cilla} shows the same pattern ($r=0.13$, rank
$0.16$), without discarding increments.

\paragraph{Quantitative agreement with the KDE reference.}
To check that $\hat U$ is not an artifact of basis extrapolation, we
compare it to the classical log-density estimator
$\hat U^{\rm cl}=(\sigma^2/2)\log\hat\pi$.  On a $121\times121$ grid
restricted to $\{\hat\pi\ge10\%\hat\pi_{\max}\}$, we report Pearson
and Spearman correlations for all estimator pairs.  These summaries
are invariant to the unidentified additive constant and positive scale.

\begin{center}
\small
\begin{tabular}{lcccc}
\hline
 & \multicolumn{2}{c}{Pepper} & \multicolumn{2}{c}{Cilla} \\
Pair & $r$ & $\rho$ & $r$ & $\rho$ \\
\hline
$\hat U$ and $\hat U^{\rm cl}$      & $+0.37$ & $+0.60$ & $+0.67$ & $+0.73$ \\
$\hat U^{\rm EM}$ and $\hat U^{\rm cl}$ & $+0.34$ & $+0.40$ & $+0.42$ & $+0.48$ \\
$\hat U$ and $\hat U^{\rm EM}$      & $-0.27$ & $+0.21$ & $+0.13$ & $+0.16$ \\
\hline
\end{tabular}
\end{center}

On Cilla the estimate of Algorithm~\ref{alg:7-1} is closer to the
reference than the Euler--Maruyama fit is, on both measures.
On Pepper the two are close in $r$, and the rank correlation separates
them, $+0.60$ against $+0.40$.

\paragraph{Helmholtz--Hodge diagnostic of the gradient-drift assumption.}
To quantify the non-gradient component omitted by the model, we fit a
separate two-component EM-MLE drift on the same RBF basis and
short-step subset.  Its curl
$\hat\omega=\partial_x\hat b_y-\partial_y\hat b_x$ is evaluated in
physical coordinates.  Figure~\ref{fig:phase1-real-helmholtz} visualizes
this diagnostic.

\begin{figure}[H]
\centering
\includegraphics[width=0.58\textwidth]{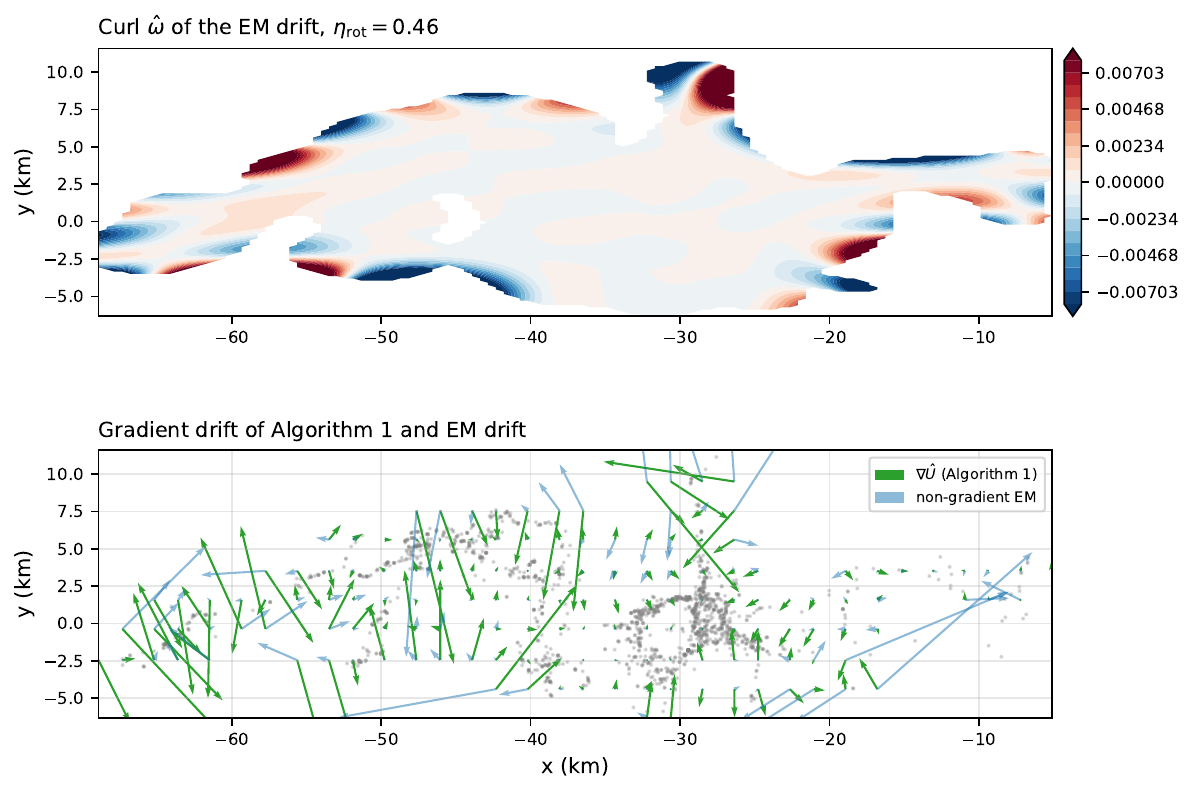}
\caption{Helmholtz--Hodge diagnostic on \emph{Pepper} (same data as
Figure~\ref{fig:phase1-real-pepper}).
Top: curl of the EM drift on the occupancy support. Bottom: gradient
drift $\nabla\hat U$ (green) and non-gradient EM drift (blue) over the
GPS positions.}
\label{fig:phase1-real-helmholtz}
\end{figure}

The scale-free rotational share is
\begin{equation}
\label{eq:9.3-curl}
 \eta_{\rm rot} = \min_{v\in\mathrm{span}\,\Psi}
   \frac{\int |\hat b(x)-\nabla v(x)|^2\,\hat\pi(x)\,dx}
        {\int |\hat b(x)|^2\,\hat\pi(x)\,dx}\in[0,1],
\end{equation}
computed on $\{\hat\pi\ge1\%\hat\pi_{\max}\}$.  Finite-basis and
support effects make it diagnostic rather than exact; the calibrated
sweep of \S\ref{sec:9.2} shows their net size.  We obtain
$\eta_{\rm rot}=0.46$ on Pepper and $0.54$ on Cilla.  Thus about half
of the fitted drift energy is rotational, but the $\pi$-preserving
experiment of \S\ref{sec:9.2} shows that this alone does not bias the
recovered gradient component.

\section{Discussion and conclusion}\label{sec:10}

\subsection{Interpretation and limitations}\label{sec:10.1}

The estimator recovers the potential $U=-v^\ast$ of
\eqref{eq:1.1}; by Proposition~\ref{prop:ls}, $\nabla\hat U$ is the
Galerkin projection of the drift in
$L^2(\tilde\mu_t(dx)\,dt)$.  Under the quadratic-control
interpretation, high values mark regions toward which the drift is
attracted \cite[Ch.~6]{hoo-etal:2017}.  The stationary algorithm is
appropriate near stationarity or on a horizon long relative to the
inverse spectral gap; otherwise the finite-horizon form of
Theorem~\ref{thm:4.3} should be used.

The control problem is a representation of the data, not a claim that
animals solve it literally.  It does not identify a unique running--
terminal cost decomposition, and marginals identify only the gradient
component \cite{weinreb-etal:2018}.  Theorem~\ref{thm:5.3} assumes
independent trajectories at common times, whereas the telemetry example
uses a quasi-stationary ergodic approximation for one path; position
noise also leaves the $O(\tau^2)$ bias of
Proposition~\ref{prop:noise}.  Thus the telemetry analysis is an
illustration, not ecological validation, and does not model individual
heterogeneity or interactions.

\subsection{Comparison with existing potential estimators}\label{sec:10.2}

The closest variational method is entropic action matching
\cite{nek-etal:2023}.  Both methods use marginal samples and, for a known
diffusion coefficient, minimize the same population functional after the sign change
$s=-v$.  Their aims and implementations differ.  Action matching starts
from a marginal curve and learns its canonical minimum-energy curl-free
drift, typically with a neural field, in order to reproduce or simulate
the observed distributional evolution.  Here the same functional is
used as an inverse stochastic-control criterion for a potential (the
negative value function).  A fixed Galerkin basis turns estimation into
the single linear solve $B\alpha=-A$ and makes the approximation,
conditioning, sampling, and time-quadrature errors explicit.  In
particular, Theorem~\ref{thm:5.3} separates the
$O(\sqrt{\log N/N})$ empirical error from the $O((\Delta t)^p)$
quadrature error on given, possibly irregular observation nodes.

The distinction is also interpretive.  If the true drift is a gradient,
the two population targets agree (up to the sign convention).  For a
general drift, marginals determine only its weighted gradient projection;
neither method identifies a divergence-free component.  We state this
projection as the estimand and interpret it as a physical potential only
under the conservative-drift assumption.  Weak-form Fokker--Planck
methods \cite{lu-etal:2024,messenger-bortz:2022} instead target drift and
diffusion coefficients directly, while \cite{batz-etal:2016} treats the
stationary linear case; Remark~\ref{rem:score-matching} records the
stationary equivalences.

For environmental covariates, \eqref{eq:4.9} has the form of a
resource-selection predictor \cite{hoo-etal:2017}.  At stationarity,
$U=(\sigma^2/2)\log\pi+C$, so the proposed, log-density, and
use-intensity estimands coincide up to constants, although their
finite-resolution errors differ.  Outside stationarity, $\hat U$ is a
finite-horizon value-function estimate, whereas the latter two summarize
occupancy.

Table~\ref{tab:methods-comparison} compares the proposed estimator with
entropic action matching, Euler--Maruyama drift fitting
\cite{bri:2010,pre-etal:2013}, KDE inversion \cite{bri-etal:2001}, and
resource- or step-selection models \cite{hoo-etal:2017}.

\begin{table}[H]
\centering
\caption{Operational comparison with selected estimators.
Symbols $\bullet$, $-$ and $\circ$ denote yes, no and setting-dependent.}
\label{tab:methods-comparison}
\renewcommand{\arraystretch}{1.0}
\small
\setlength{\tabcolsep}{4.5pt}
\begin{tabular}{lccccc}
\toprule
Property & Algo.~\ref{alg:7-1} & eAM & EM-MLE & KDE & RSF/SSF \\
\midrule
Closed form under a finite linear basis
  & $\bullet$ & $\bullet^a$ & $\bullet^b$ & n/a & $-$ \\
No transition-density approximation
  & $\bullet$ & $\bullet$ & $-$ & $\bullet$ & $\circ^c$ \\
Direct use of prescribed irregular marginal times
  & $\bullet$ & $\circ^d$ & $-$ & $\circ^e$ & $\circ^c$ \\
Linear-in-covariates potential
  & $\bullet$ & $\circ^a$ & $\bullet^b$ & $-$ & $\bullet$ \\
\midrule
Requires exogenous $\sigma^2$
  & $\bullet$ & $\bullet$ & $-$ & $\circ^f$ & $-$ \\
Restricted to a gradient/curl-free target
  & $\bullet$ & $\bullet$ & $-$ & $\bullet$ & $-$ \\
\bottomrule
\end{tabular}

\smallskip
\noindent\footnotesize
$^a$eAM uses a neural field; a finite linear parametrization gives a quadratic
system and permits linear covariates.
$^b$For an EM drift linear in its coefficients.
$^c$RSF uses positions; SSF uses steps and a step-length model.
$^d$eAM uses multiple marginal times but gives no fixed-node quadrature bound.
$^e$Stationary KDE discards interval information.
$^f$Only as the scale of $\hat U^{\rm cl}$.
\end{table}

\subsection{Summary and extensions}\label{sec:10.3}

The proposed finite-horizon Galerkin estimator turns potential recovery
into a closed-form solve whose error separates approximation, sample,
and time-quadrature contributions.  The experiments support the
predicted sampling-interval behavior and demonstrate the complete
workflow on irregular telemetry.

Natural extensions include stream-function bases for non-gradient
drifts, the kinetic Schr\"odinger problem for position--velocity tracks
\cite{chiarini-etal:2022}, and inverse mean-field control for interacting
populations \cite{santambrogio:2015}.

\subsection*{Acknowledgements}

This study is supported by JSPS KAKENHI Grant Number JP24K06861.

\noindent\textbf{AI disclosure statement.}
During the preparation of this manuscript, the author used OpenAI Codex
(accessed in 2026) to refactor and document Python code, generate routine
plotting code, and edit author-written prose.  The author critically reviewed
and tested all AI-assisted output, independently verified the mathematical
arguments, experimental design, numerical results, and conclusions, and takes
full responsibility for the content of this work.

\subsection*{Data availability}

Code reproducing all figures and the data-processing pipeline will be
available at
\url{https://github.com/yumiharu-nakano/inverse-drift-potential}.
The repository documents the provenance and licensing of the public telemetry
records distributed through
\texttt{ctmm} \cite{ctmm,fle-etal:2014,cal-etal:2016}.


\begin{thebibliography}{99}
\setlength{\itemsep}{-1.5pt}

\bibitem{ab-azar-etal:2020}
N.~Ab~Azar, A.~Shahmansoorian and M.~Davoudi,
From inverse optimal control to inverse reinforcement learning: a
historical review,
\emph{Annu.\ Rev.\ Control}, 50 (2020), 119--138.

\bibitem{batz-etal:2016}
P.~Batz, A.~Ruttor and M.~Opper,
Variational estimation of the drift for stochastic differential
equations from the empirical density,
\emph{J.\ Stat.\ Mech.\ Theory Exp.}, 2016 (2016), 083404.

\bibitem{bri:2010}
D.~R.~Brillinger,
Modeling spatial trajectories,
in \emph{Handbook of Spatial Statistics}
(A.~E.~Gelfand, P.~J.~Diggle, M.~Fuentes and P.~Guttorp, eds.),
CRC Press, 2010, 463--475.

\bibitem{bri-etal:2001}
D.~R.~Brillinger, H.~K.~Preisler, A.~A.~Ager and J.~G.~Kie,
The use of potential functions in modelling animal movement,
in \emph{Data Analysis from Statistical Foundations}
(A.~K.~M.~E.~Saleh, ed.), Nova Science, Huntington, 2001, 369--386.

\bibitem{bruckner-etal:2020}
D.~B.~Br\"uckner, P.~Ronceray and C.~P.~Broedersz,
Inferring the dynamics of underdamped stochastic systems,
\emph{Phys.\ Rev.\ Lett.}, 125 (2020), 058103.

\bibitem{cal-etal:2016}
J.~M.~Calabrese, C.~H.~Fleming and E.~Gurarie,
ctmm: an R package for analyzing animal relocation data as a
continuous-time stochastic process,
\emph{Methods Ecol.\ Evol.}, 7 (2016), 1124--1132.

\bibitem{chiarini-etal:2022}
A.~Chiarini, G.~Conforti, G.~Greco and Z.~Ren,
Entropic turnpike estimates for the kinetic Schr\"odinger problem,
\emph{Electron.\ J.\ Probab.}, 27 (2022), 152.

\bibitem{comte-etal:2007}
F.~Comte, V.~Genon-Catalot and Y.~Rozenholc,
Penalized nonparametric mean square estimation of the coefficients of
diffusion processes,
\emph{Bernoulli}, 13 (2007), 514--543.

\bibitem{cross-etal:2016}
P.~C.~Cross, J.~A.~Bowers, C.~T.~Hay, J.~Wolhuter, P.~Buss,
M.~Hofmeyr, J.~T.~du~Toit and W.~M.~Getz,
Data from: nonparameteric kernel methods for constructing home ranges
and utilization distributions,
Movebank Data Repository, 2016, DOI:10.5441/001/1.j900f88t.

\bibitem{denis-etal:2021}
C.~Denis, C.~Dion-Blanc and M.~Martinez,
A ridge estimator of the drift from discrete repeated observations of
the solution of a stochastic differential equation,
\emph{Bernoulli}, 27 (2021), 2675--2713.

\bibitem{ern-guermond:2004}
A.~Ern and J.-L.~Guermond,
\emph{Theory and Practice of Finite Elements},
Applied Mathematical Sciences, vol.~159,
Springer, 2004.

\bibitem{fle-etal:2014}
C.~H.~Fleming, J.~M.~Calabrese, T.~Mueller, K.~A.~Olson, P.~Leimgruber
and W.~F.~Fagan,
From fine-scale foraging to home ranges: a semivariance approach to
identifying movement modes across spatiotemporal scales,
\emph{Am.\ Nat.}, 183 (2014), E154--E167.

\bibitem{ctmm}
C.~H.~Fleming and J.~M.~Calabrese,
\emph{ctmm: Continuous-Time Movement Modeling},
R package version 1.3.0, 2025.
\url{https://CRAN.R-project.org/package=ctmm}.

\bibitem{florens-zmirou:1989}
D.~Florens-Zmirou,
Approximate discrete-time schemes for statistics of diffusion
processes,
\emph{Statistics}, 20 (1989), 547--557.

\bibitem{fried:1973}
I.~Fried,
Bounds on the spectral and maximum norms of the finite element
stiffness, flexibility and mass matrices,
\emph{Int.\ J.\ Solids Struct.}, 9 (1973), 1013--1034.

\bibitem{frishman-ronceray:2020}
A.~Frishman and P.~Ronceray,
Learning force fields from stochastic trajectories,
\emph{Phys.\ Rev.\ X}, 10 (2020), 021009.

\bibitem{getz-etal:2007}
W.~M.~Getz, S.~Fortmann-Roe, P.~C.~Cross, A.~J.~Lyons, S.~J.~Ryan and
C.~C.~Wilmers,
LoCoH: nonparameteric kernel methods for constructing home ranges and
utilization distributions,
\emph{PLoS ONE}, 2 (2007), e207.

\bibitem{gloaguen-etal:2018}
P.~Gloaguen, M.-P.~Etienne and S.~Le~Corff,
Stochastic differential equation based on a multimodal potential to model
movement data in ecology,
\emph{J.\ R.\ Stat.\ Soc.\ Ser.\ C}, 67 (2018), 599--619.

\bibitem{gobet-etal:2004}
E.~Gobet, M.~Hoffmann and M.~Rei\ss,
Nonparametric estimation of scalar diffusions based on low frequency data,
\emph{Ann.\ Statist.}, 32 (2004), 2223--2253.

\bibitem{hal:2016}
L.~G.~Halsey,
Terrestrial movement energetics: current knowledge and its application to
the optimising animal,
\emph{J.\ Exp.\ Biol.}, 219 (2016), 1424--1431.

\bibitem{hansen-scheinkman:1995}
L.~P.~Hansen and J.~A.~Scheinkman,
Back to the future: generating moment implications for continuous-time
Markov processes,
\emph{Econometrica}, 63 (1995), 767--804.

\bibitem{hoo-etal:2017}
M.~B.~Hooten, D.~S.~Johnson, B.~T.~McClintock, and J.~M.~Morales,
\emph{Animal Movement: Statistical Models for Telemetry Data},
CRC Press, 2017.

\bibitem{hyvarinen:2005}
A.~Hyv\"arinen,
Estimation of non-normalized statistical models by score matching,
\emph{J.\ Mach.\ Learn.\ Res.}, 6 (2005), 695--709.

\bibitem{hyvarinen:2007}
A.~Hyv\"arinen,
Some extensions of score matching,
\emph{Comput.\ Statist.\ Data Anal.}, 51 (2007), 2499--2512.

\bibitem{kays-etal:2015}
R.~Kays, M.~C.~Crofoot, W.~Jetz and M.~Wikelski,
Terrestrial animal tracking as an eye on life and planet,
\emph{Science}, 348 (2015), aaa2478.

\bibitem{kessler:1997}
M.~Kessler,
Estimation of an ergodic diffusion from discrete observations,
\emph{Scand.\ J.\ Statist.}, 24 (1997), 211--229.

\bibitem{leo:2014}
C.~L\'eonard,
A survey of the Schr\"odinger problem and some of its connections with
optimal transport,
\emph{Discrete Contin.\ Dyn.\ Syst.}, 34 (2014), 1533--1574.

\bibitem{lu-etal:2024}
L.~Lu, Z.~Zeng, Y.~Jiang, Y.~Zhu and P.~Hu,
Weak collocation regression method, fast reveal hidden stochastic
dynamics from high-dimensional aggregate data,
\emph{J.\ Comput.\ Phys.}, 502 (2024), 112799.

\bibitem{messenger-bortz:2022}
D.~A.~Messenger and D.~M.~Bortz,
Learning mean-field equations from particle data using WSINDy,
\emph{Phys.\ D}, 439 (2022), 133406.

\bibitem{mik:2021a}
T.~Mikami,
\emph{Stochastic Optimal Transportation: Stochastic Control with Fixed Marginals},
SpringerBriefs in Mathematics, Springer, 2021.

\bibitem{nak:2023}
Y.~Nakano,
Inverse stochastic optimal controls,
\emph{Automatica}, 149 (2023), 110831.

\bibitem{nak:2026}
Y.~Nakano,
Inverse stochastic control via generalized Schr\"odinger problems,
\emph{Syst.\ Control Lett.}, accepted for publication, 2026.
\url{https://arxiv.org/abs/2601.09210}.

\bibitem{nek-etal:2023}
K.~Neklyudov, R.~Brekelmans, D.~Severo and A.~Makhzani,
Action matching: learning stochastic dynamics from samples,
in \emph{Proc.\ ICML 2023}, PMLR 202, 2023, 25858--25889.

\bibitem{ng-russell:2000}
A.~Y.~Ng and S.~Russell,
Algorithms for inverse reinforcement learning,
in \emph{Proc.\ ICML 2000}, Morgan Kaufmann, 2000, 663--670.

\bibitem{ozaki:1992}
T.~Ozaki,
A bridge between nonlinear time series models and nonlinear stochastic
dynamical systems: a local linearization approach,
\emph{Statist.\ Sinica}, 2 (1992), 113--135.

\bibitem{papaspiliopoulos-etal:2012}
O.~Papaspiliopoulos, Y.~Pokern, G.~O.~Roberts and A.~M.~Stuart,
Nonparametric estimation of diffusions: a differential equations approach,
\emph{Biometrika}, 99 (2012), 511--531.

\bibitem{pavliotis-stuart:2007}
G.~A.~Pavliotis and A.~M.~Stuart,
Parameter estimation for multiscale diffusions,
\emph{J.\ Stat.\ Phys.}, 127 (2007), 741--781.

\bibitem{pokern-etal:2009}
Y.~Pokern, A.~M.~Stuart and E.~Vanden-Eijnden,
Remarks on drift estimation for diffusion processes,
\emph{Multiscale Model.\ Simul.}, 8 (2009), 69--95.

\bibitem{pre-etal:2013}
H.~K.~Preisler, A.~A.~Ager, and M.~J.~Wisdom,
Analyzing animal movement patterns using potential functions,
\emph{Ecosphere}, 4 (2013), 32.

\bibitem{ruttor-etal:2013}
A.~Ruttor, P.~Batz and M.~Opper,
Approximate Gaussian process inference for the drift function in
stochastic differential equations,
\emph{Advances in Neural Information Processing Systems}, 26 (2013).

\bibitem{santambrogio:2015}
F.~Santambrogio,
\emph{Optimal Transport for Applied Mathematicians},
Progress in Nonlinear Differential Equations and Their Applications, Vol.~87,
Birkh\"auser, 2015.

\bibitem{tenbrinck-etal:2025}
D.~Tenbrinck, N.~Uesseler, P.~Wacker and B.~Wirth,
On MAP estimates and source conditions for drift identification in SDEs,
preprint, 2025.
\url{https://arxiv.org/abs/2507.18443}.

\bibitem{vershynin:2018}
R.~Vershynin,
\emph{High-Dimensional Probability: An Introduction with Applications
in Data Science},
Cambridge University Press, 2018.

\bibitem{wainwright:2019}
M.~J.~Wainwright,
\emph{High-Dimensional Statistics: A Non-Asymptotic Viewpoint},
Cambridge University Press, 2019.

\bibitem{weinreb-etal:2018}
C.~Weinreb, S.~Wolock, B.~K.~Tusi, M.~Socolovsky and A.~M.~Klein,
Fundamental limits on dynamic inference from single-cell snapshots,
\emph{Proc.\ Natl.\ Acad.\ Sci.\ USA}, 115 (2018), E2467--E2476.

\bibitem{yon-zho:1999}
J.~Yong and X.~Y.~Zhou,
\emph{Stochastic Controls: Hamiltonian Systems and HJB Equations},
Springer, 1999.

\bibitem{ziebart-etal:2008}
B.~D.~Ziebart, A.~Maas, J.~A.~Bagnell and A.~K.~Dey,
Maximum entropy inverse reinforcement learning,
in \emph{Proc.\ AAAI 2008}, 2008, 1433--1438.

\end{thebibliography}
\end{document}